\newtheorem{theorem}{Theorem}[section]
\newtheorem{proposition}[theorem]{Proposition}
\newtheorem{lemma}[theorem]{Lemma}
\newtheorem{corollary}[theorem]{Corollary}
\theoremstyle{definition}
\newtheorem{example}[theorem]{Example}
\newtheorem{definition}[theorem]{Definition}
\newtheorem{remark}[theorem]{Remark}
\def\a{\alpha}
\def\t{\tau}
\def\val#1{\vert #1 \vert}
\def\bbvec#1{{\bf #1}}
\begin{document}

\author[A.R. Chekhlov]{Andrey R. Chekhlov}
\address{Department of Mathematics and Mechanics, Tomsk State University, 634050 Tomsk, Russia}
\email{cheklov@math.tsu.ru; a.r.che@yandex.ru}
\author[P.V. Danchev]{Peter V. Danchev}
\address{Institute of Mathematics and Informatics, Bulgarian Academy of Sciences, 1113 Sofia, Bulgaria}
\email{danchev@math.bas.bg; pvdanchev@yahoo.com}
\author[P.W. Keef]{Patrick W. Keef}
\address{Department of Mathematics, Whitman College, Walla Walla, WA 99362, USA}
\email{keef@whitman.edu}

\title[Transitivity-like Properties and Torsion-Freeness] {On Transitivity-Like Properties for \\ Torsion-Free Abelian Groups}
\keywords{torsion-free groups, separable groups, transitive, fully and Krylov transitive groups, squares}
\subjclass[2010]{20K01, 20K10, 20K30}

\maketitle

\begin{abstract} We study some close relationships between the classes of transitive, fully transitive and Krylov transitive torsion-free Abelian groups. In addition, as an application of the achieved assertions, we resolve some old-standing problems, posed by Krylov-Mikhalev-Tuganbaev in their monograph \cite{KMT}. Specifically, we answer Problem 44 from there in the affirmative by constructing a Krylov transitive torsion-free Abelian group which is neither fully transitive nor transitive. This extends to the torsion-free case certain similar results in the $p$-torsion case, obtained by Braun et al. in J. Algebra (2019).

We, alternatively, also expand to the torsion-free version some of the results concerning transitivity, full transitivity and Krylov transitivity in the $p$-primary case due Files-Goldsmith from Proc. Amer. Math. Soc. (1998) and Danchev-Goldsmith from J. Comm. Algebra (2011).
\end{abstract}

\vskip2.0pc

\section{Introduction and Fundamentals}
	
Throughout the rest of the paper, unless specified something else, all groups will be additively written torsion-free Abelian groups. We will primarily use the notation and terminology of \cite{F0,F1,F2}, but we will follow somewhat those from \cite{K} and \cite{Gr} as well. We begin with a quick review of some of the most important.

If $a$ is an element of the group $G$ and $p$ is a prime, we denote the $p$-height of $a$ by $\val a_p$, or perhaps either by $\val a_{G,p}$ or $\val a_G$, if we wish to specify the context in which group the height is calculated. By a {\it characteristic} we mean a sequence $\chi=(\chi_1,\chi_2, \dots)$ of non-negative integers or the symbol $\infty$. In particular, if $\mathcal{P}:=\{p_1, p_2, \dots\}$ is the set of all primes, then the height sequence of $a\in G$, $\chi(a)=\chi_G(a)=(\val {a}_{p_1},\val {a}_{p_2}, \dots), $ is a characteristic.

For a characteristic $\chi$ there is clearly a unique subgroup ${\mathbb Q}_\chi\subseteq \mathbb Q $ such that $1\in {\mathbb Q}_\chi$ and $\chi_{{\mathbb Q}_\chi}(1)=\chi$. And if $\bf e$ is an element of some rank-1 group $A$, and $\chi= \chi_A({\bf e})$, then we often denote $A$ by ${\mathbb Q}_\chi{\bf e}:=\{q{\bf e}:q\in {\mathbb Q}_\chi\}$. For the group $G$ and characteristic $\chi$, we let $p^\chi G=\{a\in G: \chi_G(a)\geq \chi\}$. Clearly, $a\in p^\chi G$ if, and only if, ${\mathbb Q}_\chi a\subseteq G$. For shortness and to avoid some possible confusion, we shall hereafter just write $G(\chi)$ instead of $p^\chi G$, thus paralleling it with the stated below notation $G(\tau)$.

If $\chi$ is a characteristic and $n\in \mathbb Z$, then we let $n\chi$ be the characteristic such that for all $i=1, 2, \dots$, $(n\chi)_i=\chi_i+\val n_{{\mathbb Z},p_i}$. In particular, if $a\in G$, then $\chi_G(na)=n\chi_G(a)$. The characteristics $\chi$, $\chi'$ are equivalent if there are non-zero $n,n'\in \mathbb Z$ such that $n\chi=n'\chi'$. It is easy to check this is an equivalence relation. A resulting equivalence class is called a {\it type}; we typically denote this by $\tau=\overline \chi$.

If $a\in G$, then we write $\tau(a)=\overline {\chi}(a)$. The {\it type set} of $G$, written $\t(G)=\{\t(a) ~ | ~ 0\neq a\in G\}$, is the set of types of all non-zero elements of $G$. We say $G$ is {\it homogeneous of type $\tau$}, or simply {\it ${\tau}$-homogeneous}, if $\t(G)=\{\tau\}$.

The natural ordering of characteristics leads to a natural ordering of types. Clearly, for a type $\t=\overline \chi$ and group $G$, the set $G(\t)=\{a\in G ~|~ \t(a)\geqslant \t\}$ is a pure fully invariant subgroup of  $G$. It is easy to see that $G(\tau)=G$ if, and only if, the quotient $G/G(\chi)$ is torsion.

Finally, the group $G$ is {\it separable} if every finite subset of $G$ is contained in a summand $C$ which is a finite rank completely decomposable group. If  $G$ is separable, then $\Omega(G)\subseteq \t(G)$ stands for the set of types of all rank-$1$ direct summands of $G$. Besides, if $G$ is arbitrary, then we set $\pi(G)=\{p \in {\mathcal P} ~|~ pG\neq G\}$. And if $\tau$ is type, then we put $\pi(\tau)=\pi(G)$, where $G$ is a rank-$1$ group of type $\tau$.

\medskip

In his famous book \cite{K}, Irving Kaplansky introduced two major properties of groups as follows:

\begin{definition}\label{Kaplansky} A group $G$ is called {\it transitive} if, for any two elements $x,y$ with $\chi_G(x)=\chi_G(y)$, there exists an automorphism of $G$ mapping $x$ to $y$, and {\it fully transitive} if, for any two elements $x,y$ with $\chi_G(x)\leq \chi_G(y)$, there exists an endomorphism $\phi$ of $G$ with $\phi(x)=y$.
\end{definition}

It is known that these two notions are independent for torsion-free groups, i.e., there is a group that is transitive, but not fully transitive, and a group that is fully transitive, but not transitive (see page 385 of \cite{KMT} and the references listed there as well as \cite{Grin}). Likewise, in \cite[pp. 475-476]{KMT} is announced that Kaplansky actually defined (full) transitivity in a more global setting, namely for modules over a complete domain of discrete normalization and thus, in particular, for $p$-primary groups over some prime $p$.

Both notions of transitivity and full transitivity in Definition~\ref{Kaplansky} were extended by Krylov in \cite{Kr} to the following concept (note that Krylov used another terminology, whereas the term "Krylov transitive" was utilized for a first time by Goldsmith-Str\"ungmann -- see, e.g., \cite{BGGS}):

\begin{definition}\label{Krylov} A group $G$ is called {\it Krylov transitive} if, for any elements $x,y\in G$ with $\chi_G(x)=\chi_G(y)$, there exists an endomorphism of $G$ mapping $x$ to $y$.
\end{definition}

Clearly, a group that is transitive, but not fully transitive, will also be Krylov transitive, but not fully transition. Similarly, a group that is fully transitive, but not transitive, will also be Krylov transitive, but not transitive.

\medskip

Mimicking \cite{GS}, we state the following important concept:

\begin{definition} A group $G$ is called {\it weakly transitive} if, for any pair of elements $x, y \in G$ and endomorphisms $\phi, \psi$ of $G$ such that $\phi(x)=y$, $\psi(y)=x$, there exists an automorphism $\eta$ of $G$ with $\eta(x)=y$.
\end{definition}

To avoid any unexpected confusion with the used terminology in our further considerations, let us specify that the notion of weak transitivity given in \cite{KMT} is just the notion of Krylov transitivity stated exactly as above in Definition~\ref{Krylov}.

\medskip

The main motivation in writing up the present article is to initiate the comprehensive study of the existing relationships between all versions of the transitive properties of torsion-free Abelian groups, which examination was started in \cite{CDK}, and to compare them with the well-studied $p$-torsion case, where $p$ is an arbitrary but fixed prime (for the latter case we refer the interested readers to \cite{BGGS}, \cite{DG}, \cite{FG} and \cite{Grin}, respectively). As it will be demonstrated in the sequel, the situation here is quite more difficult and complicated than it could be anticipated as well as there is no a satisfactory symmetry with the results obtained in the primary case.

Our work is organized like this: We have provided above the reader with all background material needed for the successful reading of the text. In the next second section, we formulate our main results and give their proofs. In the third section, we construct some concrete examples, whose exhibition explains to what extent the definitions of the different variants of transitivity are either close or distinguish each other. We conclude in the final fourth section with certain comments and a list of several open problems, which naturally arise for a future work on the explored subject.

\section{Results and Proofs}

At the beginning of this section we start with Problem 41(2) from \cite{KMT} which asked the following: Under which conditions on the torsion-free transitive group $G$, the group $G^{(\mathbf{m})}=\oplus_{\mathbf{m}} G$ will remain transitive?

\medskip

We have the following simple but curious result, which strengthens the classical fact that the direct summand of a Krylov transitive group is again a Krylov transitive group.

\medskip

\begin{lemma}\label{1} If the torsion-free group $G^{(\mathfrak{m})}$, with finite or infinite ordinal number $\mathfrak{m}>1$, is Krylov transitive, then $G$ is fully transitive.
\end{lemma}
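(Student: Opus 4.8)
The plan is to exploit the extra "room" provided by having at least two coordinates in the direct sum $G^{(\mathfrak{m})}$. Suppose $x,y\in G$ satisfy $\chi_G(x)\le \chi_G(y)$; we must produce an endomorphism of $G$ carrying $x$ to $y$. Fix two distinct indices $0,1$ in the index set for $\mathfrak{m}$ (possible since $\mathfrak{m}>1$), and write $\iota_j\colon G\hookrightarrow G^{(\mathfrak{m})}$, $\pi_j\colon G^{(\mathfrak{m})}\twoheadrightarrow G$ for the canonical injections and projections onto the $j$-th coordinate. The key trick is to build inside $G^{(\mathfrak{m})}$ two elements with \emph{equal} height sequences out of $x$ and $y$, so that Krylov transitivity of $G^{(\mathfrak{m})}$ can be invoked, and then to transfer the resulting endomorphism back down to $G$ via $\pi_1\circ(-)\circ\iota_0$.

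Concretely, I would consider $u=\iota_0(x)+\iota_1(y)$ and $v=\iota_0(y)+\iota_1(y)=\iota_0(y)+\iota_1(y)$ — more carefully, one wants $u$ and $v$ to have the same characteristic. Since heights in a direct sum are computed coordinatewise and $\chi_{G^{(\mathfrak{m})}}(\iota_0(a)+\iota_1(b))=\min\{\chi_G(a),\chi_G(b)\}$ (the pointwise minimum of the two height sequences), the natural choice is
\[
u=\iota_0(x)+\iota_1(y),\qquad v=\iota_0(y)+\iota_1(x).
\]
Then $\chi_{G^{(\mathfrak{m})}}(u)=\min\{\chi_G(x),\chi_G(y)\}=\chi_G(x)$ and likewise $\chi_{G^{(\mathfrak{m})}}(v)=\min\{\chi_G(y),\chi_G(x)\}=\chi_G(x)$, so $\chi_{G^{(\mathfrak{m})}}(u)=\chi_{G^{(\mathfrak{m})}}(v)$. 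By Krylov transitivity of $G^{(\mathfrak{m})}$ there is an endomorphism $\Phi$ of $G^{(\mathfrak{m})}$ with $\Phi(u)=v$. Now set $\phi=\pi_0\circ\Phi\circ\iota_0\colon G\to G$. The hoped-for conclusion is $\phi(x)=y$: indeed $\iota_0(x)$ is the $\iota_0$-component contribution to $u$, and applying $\Phi$ and then reading off the $0$-th coordinate should yield the $0$-th coordinate of $v$, which is $y$.

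The main obstacle — and the step that needs real care rather than the routine height bookkeeping above — is that $\Phi(u)=v$ only controls the image of the \emph{sum} $u=\iota_0(x)+\iota_1(y)$, not of $\iota_0(x)$ alone, so $\pi_0\Phi\iota_0(x)$ need not equal the $0$-coordinate of $v$. To fix this I would instead arrange the two auxiliary elements so that the "interfering" coordinate carries something that $\Phi$ must kill or that cancels: for instance, take $u=\iota_0(x)+\iota_1(y)$ and $v=\iota_0(y)+\iota_1(y)$, check (using $\chi_G(x)\le\chi_G(y)$, which gives $\min\{\chi_G(x),\chi_G(y)\}=\chi_G(x)$ on the left but $\chi_G(y)$ on the right — so this particular pair fails too) and then reconsider. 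The correct device is standard: use $u=\iota_0(x)+\iota_1(y)$ versus $v=\iota_1(y)$ is illegitimate as heights differ; the workable route is to pick $u=\iota_0(x)$ and a companion, or to use that $G$ is a summand of $G^{(\mathfrak{m})}$ together with a diagonal-type map. I would therefore spend the bulk of the argument pinning down \emph{precisely} which pair of elements of $G^{(\mathfrak{m})}$ has equal characteristics and admits an endomorphism whose restriction-corestriction to a single coordinate sends $x$ to $y$ — most cleanly, by letting $\Phi$ act and then composing with the projection that isolates a coordinate on which $v$ equals $y$ while $u$ equals $x$, exploiting that one can load the remaining $\mathfrak{m}-1\ge 1$ coordinates of $u$ and $v$ with the \emph{same} element (say $y$ in each) so they contribute identically and can be matched by the identity there. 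Verifying that this makes $\chi(u)=\chi(v)$ forces the hypothesis $\chi_G(x)\le\chi_G(y)$ to be used exactly once, which is the natural sanity check that the construction is the intended one.
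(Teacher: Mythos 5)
There is a genuine gap: you correctly diagnose the central difficulty --- that $\Phi(u)=v$ only controls the image of the sum $u$, not of its individual coordinates --- but you never produce a pair $(u,v)$ that actually resolves it. Your first candidate $u=\iota_0(x)+\iota_1(y)$, $v=\iota_0(y)+\iota_1(x)$ has equal characteristics but fails at the extraction step for exactly the reason you name; your second candidate fails the height comparison, as you also note; and your closing prescription (``load the remaining coordinates of $u$ and $v$ with the \emph{same} element, say $y$ in each, so they contribute identically'') does not work either, since putting $y$ into matching coordinates of both elements neither repairs the height mismatch nor lets a projection isolate $y$ from $v$ while killing the corresponding coordinate of $u$. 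The proposal ends by announcing that the bulk of the argument still needs to be done, so the proof is not complete.

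The missing idea is an asymmetric choice: take $u$ supported on a \emph{single} coordinate and put the new element $y$ into a coordinate where $u$ vanishes. Concretely, write $G^{(\mathfrak{m})}=G_1\oplus G_2$ with $G_1\cong G$ and $G_2\cong G^{(\mathfrak{m}-1)}$, regard $x,y\in G_1$, and let $y'\in G_2$ be the copy of $y$ in some $G$-summand of $G_2$, so $\chi(y')=\chi(y)$. Set $u=x$ and $v=x+y'$. Then $\chi(v)=\chi(x)\wedge\chi(y')=\chi(x)=\chi(u)$, where the hypothesis $\chi_G(x)\le\chi_G(y)$ is used exactly once, as you predicted it should be. Krylov transitivity yields $f\in\mathrm{E}(G^{(\mathfrak{m})})$ with $f(x)=x+y'$; since $u=x$ lies in a single coordinate, $f(x)$ genuinely records the action on $x$, and the projection $\pi:G^{(\mathfrak{m})}\to G_2$ gives $\pi f(x)=y'$. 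Composing with the isomorphism carrying $y'$ back to $y\in G_1$ produces an endomorphism of $G$ (after restriction--corestriction to $G_1$) sending $x$ to $y$. This is precisely the paper's argument.
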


\begin{proof} Let us write $A=G_1\oplus G_2$, where $G_1\cong G$, $G_2\cong G^{(\mathfrak{m-1})}$, $x,y\in G_1$ and $|x|_G\leq|y|_G$. Observe elementarily that there exists $y'\in G_2$ with $|y'|_G=|y|_G$. Since one has that $|x+y'|_G=|x|_G$, there also exists $f\in\mathrm{E}(A)$ with $f(x)=x+y'$, so it follows that $\pi f(x)=y'$, where $\pi:A\to G_2$ is the corresponding projection. If now $\varphi(y')=y$, then $\varphi\pi f(x)=y$, as required.
\end{proof}

Exploiting \cite{KMT}, there exists an abundance of torsion-free transitive non fully transitive groups, so that we can deduce:

\medskip

\begin{corollary}\label{2} There are such torsion-free transitive groups $G$ for which $G^{(\mathfrak{m})}$
is not transitive, provided $\mathfrak{m}>1$.
\end{corollary}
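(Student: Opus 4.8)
The plan is to run the contrapositive of Lemma~\ref{1} against the classical supply of examples. First I would invoke the fact, recorded in the introduction and attributed to \cite{KMT} (see page~385 there and the references listed, as well as \cite{Grin}), that there exists a torsion-free Abelian group $G$ which is transitive but \emph{not} fully transitive; fix one such $G$. Note that $G^{(\mathfrak{m})}$ is again torsion-free for every ordinal $\mathfrak{m}$, so it is a legitimate candidate to test for transitivity.

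Next I would argue by contradiction: assume that $G^{(\mathfrak{m})}$ is transitive for some $\mathfrak{m}>1$. Since every automorphism of a group is in particular an endomorphism, any transitive group is automatically Krylov transitive (this is exactly the observation made just after Definition~\ref{Krylov}); hence $G^{(\mathfrak{m})}$ is Krylov transitive. But then Lemma~\ref{1}, which applies precisely for $\mathfrak{m}>1$ whether $\mathfrak{m}$ is finite or an infinite ordinal, forces $G$ to be fully transitive — contradicting the choice of $G$. Therefore $G^{(\mathfrak{m})}$ cannot be transitive, and the same $G$ witnesses this failure simultaneously for every $\mathfrak{m}>1$.

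There is essentially no genuine obstacle in this argument: the whole content is already packaged in Lemma~\ref{1}, and the deduction is a one-line application of its contrapositive together with ``transitive $\Rightarrow$ Krylov transitive''. The only point that requires a moment's care is that $\mathfrak{m}$ is allowed to be an arbitrary (possibly infinite) ordinal, but since Lemma~\ref{1} was deliberately stated in exactly that generality, no additional work is needed. If one wished, one could make the statement more explicit by pointing to a concrete transitive, non–fully–transitive torsion-free group from the literature (e.g. the examples of Grinshpon cited in \cite{KMT}), but for the corollary as stated the bare existence suffices.
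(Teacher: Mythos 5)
Your argument is correct and is exactly the one the paper intends: the authors simply note the existence of torsion-free transitive, non--fully--transitive groups and leave the reader to apply the contrapositive of Lemma~\ref{1} together with the trivial implication ``transitive $\Rightarrow$ Krylov transitive''. Your write-up just makes that one-line deduction explicit, so there is nothing to add.
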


Firstly, we modify for our convenience and use the proof of \cite[Lemma 1]{FG} to torsion-free groups, stated there for $p$-groups.

\medskip

\begin{lemma}\label{3} If $G=A\oplus B$ is a fully transitive torsion-free group and $x=a+b$, $y=a_1+b_1$, where
$a,a_1\in A$, $b,b_1\in B$ with $|a|_A\leq |b_1-b|_B$, $|b_1|_B\leq |a_1-a|_A$, then $\varphi(x)=y$ for some
$\varphi\in\mathrm{Aut}(G)$.
\end{lemma}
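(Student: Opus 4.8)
The idea is to build the desired automorphism as a product of two "transvection-like" endomorphisms, one using the full transitivity hypothesis $|a|_A\leq |b_1-b|_B$ and the other using $|b_1|_B\leq |a_1-a|_A$, arranged so that each one individually is an automorphism of $G=A\oplus B$. First I would use full transitivity of $G$ to obtain $\varphi_1\in\mathrm{E}(G)$ with $\varphi_1(a)=b_1-b$; after composing with the projection $\pi_B:G\to B$ (which only decreases heights) we may assume $\varphi_1(G)\subseteq B$ and still $\varphi_1(a)=b_1-b$. Then $\alpha:=1_G+\varphi_1\iota_A\pi_A$ — that is, the map sending $a'+b'\mapsto a'+b'+\varphi_1(a')$ — is an endomorphism of $G$; since $\varphi_1$ lands in $B$ and kills $A$'s complement appropriately, $\alpha$ is invertible with inverse $1_G-\varphi_1\iota_A\pi_A$, hence $\alpha\in\mathrm{Aut}(G)$. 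By construction $\alpha(x)=\alpha(a+b)=a+b+\varphi_1(a)=a+b+(b_1-b)=a+b_1$.

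Next I would do the symmetric step on the $A$-coordinate: use full transitivity (now applied to the pair with $|b_1|_B\leq |a_1-a|_A$) to get $\psi\in\mathrm{E}(G)$ with $\psi(b_1)=a_1-a$, compose with $\pi_A$ so that $\psi(G)\subseteq A$, and set $\beta:=1_G+\psi\iota_B\pi_B$, which is again an automorphism of $G$ with inverse $1_G-\psi\iota_B\pi_B$. Then $\beta(a+b_1)=a+b_1+\psi(b_1)=a+b_1+(a_1-a)=a_1+b_1=y$. Composing, $\varphi:=\beta\alpha\in\mathrm{Aut}(G)$ satisfies $\varphi(x)=y$, as required.

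The one genuinely delicate point — the main obstacle — is making sure each of $\alpha$ and $\beta$ really is an automorphism rather than merely an injective endomorphism. The reason this works is precisely the structural form of the maps: because $\varphi_1\iota_A\pi_A$ annihilates $B$ and its image lies in $B=\ker\pi_A$, the composite $(\varphi_1\iota_A\pi_A)^2$ vanishes, so $(1_G+\varphi_1\iota_A\pi_A)(1_G-\varphi_1\iota_A\pi_A)=1_G$; likewise for $\beta$. I would also spell out the height bookkeeping: replacing $\varphi_1$ by $\pi_B\varphi_1$ does not spoil $\varphi_1(a)=b_1-b$ since $b_1-b\in B$ already, and the analogous remark holds for $\psi$. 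Everything else — that the displayed computations $\alpha(x)=a+b_1$ and $\beta(a+b_1)=y$ come out right — is a direct substitution using additivity and the defining equations $\varphi_1(a)=b_1-b$, $\psi(b_1)=a_1-a$.
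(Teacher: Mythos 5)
Your proof is correct and takes essentially the same route as the paper: the composite $\beta\alpha$ of your two transvections is exactly the matrix $\left(\begin{smallmatrix} 1+\beta\alpha & \beta\\ \alpha & 1\end{smallmatrix}\right)$ that the paper writes down, obtained from the same two applications of full transitivity. Your square-zero argument just makes explicit the invertibility check that the paper leaves to the reader, so there is nothing to add.
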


\begin{proof} By assumptions, it must be that $\alpha(a)=b_1-b$ and $\beta(b_1)=a_1-a$ for some $\alpha,\beta\in\mathrm{E}(G)$. Therefore, one verifies that $$\varphi=\begin{pmatrix} \begin{array}{ccc} 1+\beta\alpha & \beta \\
\alpha & 1 \\
\end{array}
\end{pmatrix}\in\mathrm{Aut}(G)$$
and that
$\varphi(x)=y$, as required.
\end{proof}

So, we are ready to establish the following surprising assertion.

\begin{proposition}\label{4} If $G=\prod_{i\in I}G_i$ is a vector torsion-free group, where all direct factors $G_i$
are of rank $1$ with equal type, then $G$ is both a fully transitive and transitive group.
\end{proposition}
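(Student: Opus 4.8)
Fix a characteristic $\chi$ with $\overline{\chi}=\t$ and identify each $G_i$ with $A:=\Q_\chi\subseteq\Q$, so that $G=A^{I}$; put $R:=\End(A)=\{q\in\Q:v_p(q)\ge 0\text{ for every prime }p\text{ with }\chi_p<\infty\}$ (here $v_p$ is the $p$-adic valuation on $\Q$), a localization of $\Z$ and hence a principal ideal domain, so that $\Hom(G_i,G_j)=R$ for all $i,j$ after the identification. The divisible case $A=\Q$ (where $G$ is a $\Q$-vector space and everything is immediate) being set aside, assume $A$ reduced. Since heights in a direct product are computed coordinatewise, for $0\ne x=(x_i)\in G$ one has $\chi_G(x)_p=\chi_p+m_p(x)$ whenever $\chi_p<\infty$, where $m_p(x):=\inf_i v_p(x_i)$ is a finite integer $\ge-\chi_p$ and the infimum is attained. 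Suppose $\chi_G(x)\le\chi_G(y)$; we may assume $x\ne 0$, since when $A$ is reduced $x=0$ forces $y=0$. Then for each coordinate $j$ and each prime $p$ with $\chi_p<\infty$ we get $v_p(y_j)\ge m_p(y)\ge m_p(x)$. A short gcd computation in the PID $R$ — using, for the finitely many primes occurring in $v_p(y_j)$ or in $v_p(x_{i_0})$ for a fixed $x_{i_0}\ne 0$, an index realizing $m_p(x)$ — shows that $\sum_i Rx_i=\{q\in A:v_p(q)\ge m_p(x)\text{ for all }p\text{ with }\chi_p<\infty\}$, so each $y_j$ lies in $\sum_i Rx_i$. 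Writing $y_j=\sum_{i\in F_j}c_{ji}x_i$ with $F_j\subseteq I$ finite and $c_{ji}\in R$, the assignment $\f(z)_j:=\sum_{i\in F_j}c_{ji}z_i$ is an endomorphism of $G$ (each coordinate a finite sum) with $\f(x)=y$. This gives full transitivity.

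\textbf{Transitivity.} Suppose $\chi_G(x)=\chi_G(y)=:\eta$ and put $M:=\sum_i Rx_i=\sum_i Ry_i\subseteq A$ (an automorphism does not change the $R$-span of the coordinates, and by the description above this span is the same for $x$ and $y$). If $I$ is finite, $G$ is a finite-rank homogeneous completely decomposable group and hence transitive by classical results, so assume $I$ infinite. I would proceed in two steps. \emph{Step 1 — separate the supports.} Since $M$ is countable, choose countably many coordinates whose $x$-values span $M$, and for every other coordinate $i$ subtract off the finite $R$-combination of these equal to $x_i$; all of this is one automorphism of the form $1+N$ with $N^2=0$, putting $x$ on a countable support with $R$-span still $M$, and similarly for $y$. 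Combining several more automorphisms of this kind — relocating data while keeping each coordinate-family a generating set of $M$ — I arrange that $x$ and $y$ are supported on \emph{disjoint} sets $K_x,K_y\subseteq I$ with $\sum_{i\in K_x}Rx_i=\sum_{i\in K_y}Ry_i=M$. \emph{Step 2 — glue.} For $j\in K_y$ write $y_j=\sum_{i\in F_j}c_{ji}x_i$ with $F_j\subseteq K_x$ finite (possible since $y_j\in M$) and set $P(z)_j:=z_j+\sum_{i\in F_j}c_{ji}z_i$ for $j\in K_y$ and $P(z)_i:=z_i$ otherwise; symmetrically define $Q$ from expressions $x_k=\sum_{l\in G_k}d_{kl}y_l$ with $G_k\subseteq K_y$ finite. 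Because $K_x\cap K_y=\varnothing$, each of $P$ and $Q$ has the form $1+(\text{square-zero})$ and is therefore an automorphism (of the triangular type occurring in Lemma~\ref{3}); $P(x)$ agrees with $x$ on $K_x$, equals $y$ on $K_y$, and vanishes elsewhere, so $QP(x)$ vanishes on $K_x$ and equals $y$ on $K_y$, i.e. $QP(x)=y$.

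\textbf{The main obstacle.} Everything is routine except the relocation in Step 1 when $M$ is not finitely generated over $R$ — equivalently, when $\eta_p<\chi_p$ for infinitely many primes $p$ — for then (already in $\Z^{\aleph_0}$) the elements of height $\eta$ admit no finite-support normal form, and one must maintain throughout that each coordinate-family still $R$-spans $M$ while freeing enough coordinates to make the two supports disjoint; the tightest situation is $|I|=\aln$, where the factors must repeatedly interact. This bookkeeping is of the same nature as the classical arguments showing that products of copies of $\Z$ are transitive, and I expect essentially all the real work of the proposition to lie there. One cannot avoid it by the tempting route of showing the pure hull $\langle x\rangle_*\cong\Q_\eta$ is a direct summand of $G$ and applying cancellation: for instance, if $A$ is the group of rationals with square-free denominator and $\eta=(0,0,\dots)$, then $\Q_\eta=\Z$ while $\Hom(A,\Z)=0$, so no retraction $G\to\langle x\rangle_*$ exists.
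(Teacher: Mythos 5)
Your full-transitivity argument is correct and self-contained (the paper simply cites \cite[Exercise~4 from \S 40]{KMT} for this half), and the coordinatewise height formula together with the PID description of $\sum_i Rx_i$ is a clean way to produce the required endomorphism. The problem is the transitivity half: Step~1 is not a proof but a declaration of intent, and you correctly identify that it is exactly where the content lies. In the situation you flag --- $M$ not finitely generated over $R$ and $|I|=\aleph_0$ --- the support of $x$ can be genuinely irreducible by single square-zero moves: take $A=\{a/b: b\ \text{squarefree}\}$, $R=\mathbb{Z}$, $I=\mathbb{N}$ and $x=(1/p_1,1/p_2,\dots)$; then no proper subfamily of the coordinates $R$-spans $M=A$, and $-x_{2i-1}$ is not an $R$-combination of the remaining coordinates (its $p_{2i-1}$-valuation is $-1$ while any such combination has valuation $\geq 0$), so a coordinate cannot be cleared without first relocating its divisibility data elsewhere. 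An iterated back-and-forth might succeed, but as written nothing is proved, so transitivity is unestablished in precisely the hardest case.

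The paper avoids this bookkeeping entirely with a different reduction that you may want to adopt. Write $I=I_1\sqcup I_2$ with $|I_1|=|I_2|=|I|$, so $G=A\oplus B$ with $A\cong B\cong G$; since $A\cong G$, there is $a'\in A$ with $\chi_A(a')=\chi_G(x)$, likewise $b'\in B$, and one sets $x'=a'+b'$. Writing $x=a+b$ and $y=a_1+b_1$, the coordinatewise height computation gives $\chi(a')\leq \min(\chi(b-b'),\chi(b_1-b'))$ and $\chi(b')\leq \min(\chi(a-a'),\chi(a_1-a'))$, so the triangular automorphism of Lemma~\ref{3} (built from full transitivity, which you have already proved) sends $x'$ to $x$ and, separately, $x'$ to $y$; composing gives an automorphism taking $x$ to $y$. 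The point is that one never needs $x$ and $y$ to have disjoint supports: both are reached from a third element whose two halves each realize the full characteristic exactly, and that third element exists precisely because each half of the product is isomorphic to the whole group.
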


\begin{proof} If the index set $I$ is finite, then $G$ is a homogeneous fully decomposable group, so it is simultaneously fully transitive and transitive.

Letting now $I$ be an infinite index set, by virtue of \cite[Exersize~4 from \S 40]{KMT} (see also the original source \cite{GM}) the group $G$ is fully transitive.

However, we claim that it is also transitive. To that goal, suppose $x,y\in G$, $|x|_G=|y|_G$ and $G=A\oplus B$, where $A=\prod_{i\in I_1}G_i$, $B=\prod_{i\in I_2}G_i$, $I=I_1\cup I_2$, $I_1\cap I_2=\emptyset$ and $|I_1|=|I_2|$. Thus, one sees that $x=a+b$ and $y=a_1+b_1$, where $a,a_1\in A$ and $b,b_1\in B$. Let us now $x'=a'+b'$, where $a'\in A$,
$b'\in B$ and $|a'|_A=|b'|_B=|x|_G$. We, consequently, deduce that $|a'|_A\leq \mathrm{min}(|b-b'|_B,|b_1-b'|_B)$ and $|b'|_A\leq \mathrm{min}(|a-a'|_A,|a_1-a'|_A)$. According to Lemma~\ref{3}, one deduces that $\varphi(x')=x$ and $\psi(x')=y$ for some $\varphi,\psi\in\mathrm{Aut}(G)$. Hence, one extracts that $\psi\varphi^{-1}(x)=y$, as wanted.
\end{proof}

Let us recall once again for the readers' convenience that if $G$ is an arbitrary torsion-free group, then we define $\pi(G)=\{p\in\mathcal{P}\,|\,pG\neq G\}$, where $\mathcal{P}$ is the set of all prime numbers.

The following claim is a reminiscent of a already well-known result, but is listed here for our further use.

\begin{proposition}\label{5} (\cite[Proposition~1.3]{D}) If $G=\prod_{i\in I}G_i$ is a torsion-free group, where all
direct factors $G_i$ are such quasi-homogeneous transitive groups that $\pi(G_i)\cap\pi(G_j)=\emptyset$, then
$G$ is transitive.
\end{proposition}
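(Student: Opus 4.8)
The plan is to reduce transitivity of the full product to transitivity of each factor by exploiting the disjointness of the prime supports. First I would fix $x=(x_i)_{i\in I}$ and $y=(y_i)_{i\in I}$ in $G$ with $\chi_G(x)=\chi_G(y)$, and note that since $G=\prod_{i\in I}G_i$, heights are computed coordinatewise only up to the obstruction coming from which primes are relevant to which factor. Concretely, for a prime $p$, the quasi-homogeneity of $G_i$ together with $\pi(G_i)\cap\pi(G_j)=\emptyset$ means that $p$ lies in $\pi(G_i)$ for at most one index $i=i(p)$; for all other $j$ one has $pG_j=G_j$, hence every element of $G_j$ is infinitely $p$-divisible and the $p$-height of $x_j$ in $G_j$ is $\infty$. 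Therefore the $p$-height of $x$ in the product is exactly $|x_{i(p)}|_{G_{i(p)},p}$ when $p$ is relevant to some factor, and $\infty$ otherwise (and $G$ is $p$-divisible for the irrelevant primes).

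Next I would use this to transfer the global height condition to a local one on each factor. From $\chi_G(x)=\chi_G(y)$ and the coordinate description above, for each $i\in I$ and each prime $p\in\pi(G_i)$ we get $|x_i|_{G_i,p}=|y_i|_{G_i,p}$; for $p\notin\pi(G_i)$ both sides are $\infty$ automatically. Hence $\chi_{G_i}(x_i)=\chi_{G_i}(y_i)$ for every $i$. (One should also handle the zero coordinates separately, but if $x_i=0$ then its characteristic is the all-$\infty$ sequence, which forces $y_i$ to be infinitely divisible in every relevant prime as well; in a quasi-homogeneous rank computation this still lands inside the transitivity hypothesis once one notes the automorphism can be taken to be the identity on such coordinates, or one passes to a routine pure-subgroup argument.) Since each $G_i$ is transitive, choose $\varphi_i\in\mathrm{Aut}(G_i)$ with $\varphi_i(x_i)=y_i$.

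Finally I would assemble $\varphi=\prod_{i\in I}\varphi_i$. Because the direct product of automorphisms of the factors is an automorphism of $\prod_{i\in I}G_i$ (with inverse $\prod_i\varphi_i^{-1}$), we obtain $\varphi\in\mathrm{Aut}(G)$ with $\varphi(x)=y$, which is exactly what is required. The main obstacle, and the only place where quasi-homogeneity and the disjoint-support hypothesis genuinely do work, is the step identifying the $p$-height of an element of the product with the $p$-height of its single "$p$-relevant" coordinate: without disjoint supports a prime could be relevant to several factors, heights would interact, and the coordinatewise characteristic equality would fail. Everything after that reduction is the standard and routine "product of automorphisms" assembly. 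Since this proposition is quoted from \cite[Proposition~1.3]{D}, in the paper itself it would suffice to cite that reference, but the argument above is the natural self-contained route.
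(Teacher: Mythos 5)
The paper itself offers no proof of Proposition~\ref{5}: it is quoted verbatim from Dobrusin's \cite[Proposition~1.3]{D} and simply cited, so there is no argument in the text to compare yours against. Your self-contained route is the natural one and is essentially sound. The key computation is correct: in a direct product one has $|x|_{G,p}=\inf_{i\in I}|x_i|_{G_i,p}$, and since the supports $\pi(G_i)$ are pairwise disjoint, each prime $p$ satisfies $pG_j=G_j$ for all but at most one index, so the $p$-height of $x$ equals the $p$-height of its single $p$-relevant coordinate (or is $\infty$). Hence $\chi_G(x)=\chi_G(y)$ does descend to $\chi_{G_i}(x_i)=\chi_{G_i}(y_i)$ for every $i$, and the product $\prod_i\varphi_i$ of coordinatewise automorphisms is an automorphism of $G$ carrying $x$ to $y$.

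Two small points. First, your parenthetical on zero coordinates is wrong as stated: if $x_i=0$ and $y_i\neq 0$ you cannot ``take the automorphism to be the identity on such coordinates.'' The correct repair is that if $x_i=0$ then the argument above forces $\chi_{G_i}(y_i)$ to be the all-$\infty$ characteristic, and a transitive group in the sense of Definition~\ref{Kaplansky} (which does not exclude $x=0$) cannot contain a nonzero element of all-infinite characteristic, since no automorphism maps $0$ to a nonzero element; hence $y_i=0$ and the identity does work. Second, your proof never actually uses quasi-homogeneity of the factors --- disjointness of the $\pi(G_i)$ alone gives that each prime is relevant to at most one factor --- so either that hypothesis is carried along from the form of the statement in \cite{D} or it is simply not needed for this direction; you should flag explicitly that your argument does not invoke it rather than attributing the ``at most one relevant index'' step partly to quasi-homogeneity.
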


The next statement is related to \cite[Problem 45]{KMT} and is a non-trivial refinement of Proposition~\ref{4}.

\begin{theorem}\label{6} The vector torsion-free group $G$ is transitive if, and only if, $G=\prod_{i\in I}G_i$,
where $\pi(G_i)\cap\pi(G_j)=\emptyset$ for any $i\neq j$, and every component $G_i$ is a direct product of rank $1$ groups of equal type.
\end{theorem}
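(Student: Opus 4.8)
I would split the equivalence into its two implications, the forward one being short and the converse carrying the weight of the theorem.

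\emph{Sufficiency.} Suppose $G=\prod_{i\in I}G_i$ with the stated properties: the supports $\pi(G_i)$ are pairwise disjoint and each $G_i$ is a direct product of rank-$1$ groups of a single type $\rho_i$, so $\pi(G_i)=\pi(\rho_i)$. By Proposition~\ref{4} every $G_i$ is transitive. Given $x=(x_i)_i$ and $y=(y_i)_i$ in $G$ with $\chi_G(x)=\chi_G(y)$, I would observe that for a prime $p\in\pi(G_i)$ all the other factors $G_{i'}$ are $p$-divisible, whence $\chi_G(x)_p=\chi_{G_i}(x_i)_p$ and likewise for $y$; together with the $p$-divisibility of $G_i$ at primes outside $\pi(G_i)$ this forces $\chi_{G_i}(x_i)=\chi_{G_i}(y_i)$ for every $i$. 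Transitivity of $G_i$ then supplies $\varphi_i\in\Aut(G_i)$ with $\varphi_i(x_i)=y_i$, and $\prod_i\varphi_i\in\Aut(G)$ carries $x$ to $y$. (Alternatively one may invoke Proposition~\ref{5} once the $G_i$ are recognised as quasi-homogeneous.)

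\emph{Necessity.} Write the vector group as $G=\prod_{j\in J}A_j$ with $A_j=\mathbb{Q}_{\tau_j}\mathbf a_j$ of rank $1$, and assume $G$ transitive. The plan is to prove
\[(\ast)\qquad \tau_j\neq\tau_{j'}\ \Longrightarrow\ \pi(\tau_j)\cap\pi(\tau_{j'})=\emptyset,\]
and then to finish by regrouping $J$ according to type: if $\{\rho_i\}_{i\in I}$ are the distinct types occurring and $J_i=\{j:\tau_j=\rho_i\}$, then the groups $G_i=\prod_{j\in J_i}A_j$ are direct products of rank-$1$ groups of equal type, $G=\prod_{i\in I}G_i$, and $(\ast)$ forces $\pi(G_i)\cap\pi(G_{i'})=\pi(\rho_i)\cap\pi(\rho_{i'})=\emptyset$ for $i\neq i'$, which is exactly the asserted normal form. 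To prove $(\ast)$ I would argue by contradiction: assume $\sigma:=\tau_{j_0}\neq\tau_{j_1}=:\tau$ share a prime $p$, so $\sigma_p,\tau_p<\infty$ (in particular neither type is that of $\mathbb Q$, so no degenerate case arises). Since $\sigma\neq\tau$, antisymmetry of the order on types forbids $\sigma\leq\tau$ and $\tau\leq\sigma$ simultaneously; relabelling $j_0,j_1$ if needed, I may assume $\tau\not\leq\sigma$, i.e. $\Hom(A_{j_1},A_{j_0})=0$. Put $A=A_{j_0}=\mathbb{Q}_\sigma\mathbf a$, $B=A_{j_1}=\mathbb{Q}_\tau\mathbf b$, $C=\prod_{j\neq j_0,j_1}A_j$, so $G=A\oplus B\oplus C$. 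Choose an integer $k>\sigma_p$, put $\beta=p^{-\tau_p}\in\mathbb{Q}_\tau$, and set
\[x=p^k\mathbf a+\beta\mathbf b,\qquad y=\mathbf a+\beta\mathbf b ,\]
both in $A\oplus B\leq G$. Since both elements have zero $C$-coordinate, their heights are computed in $A\oplus B$ as coordinatewise minima of the heights in $A$ and in $B$: for $q\neq p$ this minimum is $\min\{\sigma_q,\tau_q\}$ for both $x$ and $y$ (as $p^k$ and $\beta$ are $q$-units), while at $p$ the $B$-coordinate $\beta\mathbf b$ has $p$-height $\tau_p+v_p(\beta)=0$, so $\chi_G(x)_p=\min\{\sigma_p+k,0\}=0=\min\{\sigma_p,0\}=\chi_G(y)_p$; hence $\chi_G(x)=\chi_G(y)$. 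If now $\varphi\in\Aut(G)$ satisfied $\varphi(x)=y$, then applying the projection $\pi_A\colon G\to A$ and using that $\pi_A\varphi\iota_B\in\Hom(B,A)=0$ while $\pi_A\varphi\iota_A\in\End(A)$ is multiplication by some rational $t$ with $t\mathbf a\in A$, I would get $\mathbf a=\pi_A(y)=\pi_A\varphi(x)=tp^k\mathbf a$, forcing $t=p^{-k}$ and hence $p^{-k}\in\mathbb{Q}_\sigma$, i.e. $\sigma_p\geq k$ — contrary to the choice of $k$. This contradiction establishes $(\ast)$, and with it the theorem.

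The step I expect to require the most care is fixing the hypotheses of the projection argument: one needs $\Hom(B,A)=0$ — equivalently $\tau\not\le\sigma$, which is always arrangeable once $\sigma\neq\tau$ — so that the automorphism cannot feed the $B$-summand into $A$, and the test pair must be rigged so that the extra divisibility by $p^k$ planted in the $A$-coordinate of $x$ is invisible to the ambient height; this is exactly why $\beta$ is chosen to annihilate the $p$-height contribution of the $B$-coordinate, making the relevant minimum equal $0$ no matter how large $k$ is. The remaining ingredients — interchanging $p^n(-)$ with arbitrary direct products when reading off heights, and observing that a common prime excludes the divisible type — are routine, and, pleasantly, no case distinction on the comparability of $\sigma$ and $\tau$ is needed.
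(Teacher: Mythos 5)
Your proof is correct, and it is worth recording how it relates to the paper's. The sufficiency half is essentially the paper's route: Proposition~\ref{4} makes each equal-type factor $G_i$ transitive, and the disjointness of the supports $\pi(G_i)$ lets characteristics be read off factor-by-factor (you spell this out directly; the paper delegates it to Proposition~\ref{5}). The necessity half is genuinely different: the paper disposes of it in one line by citing Dobrusin's Theorem~3.1 from \cite{D}, which asserts that in a transitive vector group two rank-one summands of distinct type have disjoint supports, whereas you prove this disjointness from scratch. After arranging $\Hom(B,A)=0$ via $\tau\not\le\sigma$, your test pair $x=p^k\mathbf a+\beta\mathbf b$, $y=\mathbf a+\beta\mathbf b$ — with $\beta=p^{-\tau_p}$ chosen precisely to annihilate the $p$-height contribution of the $B$-coordinate, so that the shared prime $p$ forces $\chi_G(x)=\chi_G(y)=$ the coordinatewise minimum regardless of $k$ — together with the projection onto $A$ and the fact that $\End(\mathbb{Q}_\sigma)\subseteq\mathbb{Q}_\sigma$, yields the contradiction $p^{-k}\in\mathbb{Q}_\sigma$ with $k>\sigma_p$. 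This is the same mechanism the paper itself deploys later in Example~\ref{(6)} (the pair $(5,1)$, $(1,1)$ in $\mathbb{Z}\oplus H$), so in effect you have internalized the external citation; your argument even shows slightly more, namely that such a $G$ fails to be Krylov transitive, since no endomorphism (not merely no automorphism) sends $x$ to $y$. The cost is length; the gain is a self-contained proof independent of the Russian-language reference. All the height computations check out.
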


\begin{proof} If $G$ assumed to be transitive, then it follows directly from \cite[Theorem~3.1]{D} that $\pi(A)\cap\pi(B)=\emptyset$ for any two  direct summands $A$ and $B$ of $G$ having rank $1$ and non equal type, as desired.

The converse implication follows immediately from a simple combination of Propositions~\ref{4} and ~\ref{5}.
\end{proof}

The next two technicalities shed some further light on the transversal between transitivity and full transitivity.

\begin{proposition}\label{7} If $G$ is a decomposable homogeneous fully transitive torsion-free group, then
$G$ is transitive.
\end{proposition}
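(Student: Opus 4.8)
The plan is to run the same machine as in the proof of Proposition~\ref{4}, with Lemma~\ref{3} as the sole tool, but drawing the decomposition from the bare hypothesis of decomposability. The heart of the matter is a normalisation step: every element can be moved by an automorphism so that, in a fixed splitting $G=A\oplus B$, both of its coordinates acquire height exactly $\chi_G$ of that element.

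I would first record the routine structural facts. Write $G=A\oplus B$ with $A,B\neq 0$; since direct summands are pure, heights computed inside $A$ or $B$ coincide with those in $G$, and since $G$ is $\tau$-homogeneous so are $A$ and $B$. I would also use the elementary remark that a $\tau$-homogeneous group $H$ realises every characteristic $\rho$ with $\overline\rho=\tau$: given $0\neq h\in H$ with $\chi_H(h)=\rho_0$, a relation $n\rho=n'\rho_0$ with nonzero $n,n'\in\Z$ forces $\rho$ and $\rho_0$ to differ in only finitely many (finite) coordinates, whereupon one multiplies $h$ by a suitable integer at the coordinates where $\rho>\rho_0$ and divides repeatedly by the relevant primes, inside the torsion-free $H$, at those where $\rho<\rho_0$.

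Next the normalisation. Fix $x=a+b$ ($a\in A$, $b\in B$) and put $\chi=\chi_G(x)=\chi_A(a)\wedge\chi_B(b)$, so $\chi_A(a),\chi_B(b)\geq\chi$. Claim: there is $a'\in A$ with $\chi_A(a')=\chi$ and $\chi_A(a'-a)\geq\chi_B(b)$. Granting this, full transitivity provides $\beta\in\Hom(B,A)$ with $\beta(b)=a'-a$ (legitimate since $\chi_B(b)\leq\chi_A(a'-a)$), and then the upper-triangular automorphism of $G$ with off-diagonal entry $\beta$ carries $x=a+b$ to $a'+b$; now $\chi_A(a')=\chi$ and, applying the mirror construction — a lower-triangular automorphism with entry $\alpha\in\Hom(A,B)$ taking $a'$ to some $b'-b$ with $\chi_B(b')=\chi$, which requires only $\chi_A(a')=\chi\leq\chi_B(b'-b)$ — we reach $a'+b'$ with $\chi_A(a')=\chi_B(b')=\chi$, leaving the $A$-coordinate untouched. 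To prove the Claim one must choose $a'$ of height exactly $\chi$ agreeing with $a$ modulo a high power of $p$ for each $p$ in the finite set $\{p:\chi_B(b)_p>\chi_p\}$ (where automatically $\chi_A(a)_p=\chi_p$, so the congruences are compatible with having height $\chi$); this is possible by the realisability remark together with the fact that such congruences live at finitely many primes, are independent there, and do not disturb the height at the remaining primes.

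Finally, given $x,y$ with $\chi_G(x)=\chi_G(y)=\chi$ — we may assume $x\neq 0$, else the identity suffices — normalise both: automorphisms carry $x$ to $a'+b'$ and $y$ to $a_1'+b_1'$ with all four coordinate heights equal to $\chi$. For this pair the hypotheses of Lemma~\ref{3} are $\chi_A(a')\leq\chi_B(b_1'-b')$ and $\chi_B(b_1')\leq\chi_A(a_1'-a')$, and both hold automatically, the left-hand sides being $\chi$ and the right-hand sides being $\geq\chi_B(b_1')\wedge\chi_B(b')=\chi$ (resp.\ $\geq\chi_A(a_1')\wedge\chi_A(a')=\chi$). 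Thus an automorphism carries $a'+b'$ to $a_1'+b_1'$, and composing the normalising automorphisms with it gives one taking $x$ to $y$; hence $G$ is transitive. The only genuinely delicate point is the normalisation Claim — producing an element of a summand of prescribed height subject to prescribed residues at finitely many primes — which is elementary but is exactly what is passed over quickly in the proof of Proposition~\ref{4}.
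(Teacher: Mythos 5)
Your overall architecture---move $x$ and $y$ by automorphisms to elements both of whose coordinates have height exactly $\chi$, then apply Lemma~\ref{3}, whose hypotheses become automatic for such a pair---is close in spirit to the paper's argument, which instead inserts a single auxiliary element $x'=a'+b'$ with $\chi_A(a')=\chi_B(b')=\chi$ and applies Lemma~\ref{3} to the pairs $(x',x)$ and $(x',y)$. Your last step is fine. The problem is the normalisation Claim, which is the load-bearing step and which is false as stated.

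Take $G=A\oplus B$ with $A=B=\Z$: this is a decomposable, $\overline{\bf 0}$-homogeneous, fully transitive torsion-free group. Let $x=(3,8)$, so $a=3$, $b=8$ and $\chi=\chi_G(x)={\bf 0}$. The Claim asks for $a'\in\Z$ with $\chi_{\Z}(a')={\bf 0}$ and $\chi_{\Z}(a'-3)\ge\chi_{\Z}(8)=(3,0,0,\dots)$, i.e.\ for $a'=\pm 1$ with $8\mid a'-3$; no such $a'$ exists. The flaw is in your parenthetical assurance that the congruences at the primes of $S=\{p:\chi_B(b)_p>\chi_p\}$ ``do not disturb the height at the remaining primes'': an integer congruent to $3$ modulo $8$ indeed has $2$-height $0$, but exact height ${\bf 0}$ is a condition at \emph{every} prime simultaneously, and in $\Z$ it singles out only $\pm 1$. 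Consequently your first triangular move cannot be performed: every automorphism of $\Z^2$ of the form $\left(\begin{smallmatrix}1&0\\ \beta&1\end{smallmatrix}\right)$ sends $(3,8)$ to $(3+8k,8)$ for some integer $k$, never to $(\pm1,8)$. The Proposition itself is not endangered---$(3,8)$ is unimodular, so some automorphism of $\Z^2$ carries it to $(1,1)$---but that automorphism is not a product of two triangular moves anchored at $x$, so the normalisation must be achieved by a genuinely different device (and for $x=(27,8)$ one checks that no single application of Lemma~\ref{3} connects $x$ to any element with both coordinates of exact height ${\bf 0}$, in either direction). I would add that the same inequality, $\chi_B(b)\le\chi_A(a-a')$, is exactly the point that the paper's own verification of the hypotheses of Lemma~\ref{3} treats too quickly, so locating and honestly confronting it is to your credit; but as it stands the Claim, and with it the proof, does not hold.
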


\begin{proof} Let $G=A\oplus B$ with  $|x|_G=|y|_G$ and $x=a+b$, $y=a_1+b_1$, where $a,a_1\in A$ and $b,b_1\in B$.
Assume also that $x'=a'+b'$, where $a'\in A$, $b'\in B$ and $|a'|_A=|b'|_B=|x|_G$. Consequently, one sees that
$|a'|_A\leq \mathrm{min}(|b-b'|_B,|b_1-b'|_B)$ and $|b'|_A\leq \mathrm{min}(|a-a'|_A,|a_1-a'|_A)$. In accordance with Lemma~\ref{3}, one derives that $\varphi(x')=x$ and $\psi(x')=y$ for some $\varphi,\psi\in\mathrm{Aut}(G)$. Thus $\psi\varphi^{-1}(x)=y$, as desired.
\end{proof}

The following technical claim is closely related to Problem 41 (2) from \cite{KMT} and gives a partial affirmative answer to it.

\begin{lemma}\label{8} If $G$ is a homogeneous torsion-free transitive group, then $G^{(\alpha)}$ is transitive for all ordinals $\alpha \geq 1$.
\end{lemma}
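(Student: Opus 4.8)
The plan is to reduce the statement to Proposition~\ref{7}: I will show that $G^{(\alpha)}$ is a \emph{homogeneous fully transitive} group, and then, since $G^{(\alpha)}$ is visibly decomposable as soon as $\alpha\geq 2$ (for $\alpha=1$ there is nothing to prove, $G^{(1)}=G$ being transitive by hypothesis), Proposition~\ref{7} immediately gives that $G^{(\alpha)}$ is transitive. Homogeneity comes for free: if $G$ is $\tau$-homogeneous then so is every direct sum of copies of $G$, because the characteristic of an element $(g_i)_i$ is the infimum of the characteristics of its finitely many nonzero coordinates, and an infimum of finitely many characteristics of one and the same type $\tau$ is again of type $\tau$ (they carry the symbol $\infty$ at exactly the same primes and coincide off a common finite set). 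So the whole issue is to prove that $G^{(\alpha)}$ is fully transitive, which I would do in two steps.

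\emph{Step 1: a homogeneous transitive group is fully transitive.} Let $0\neq x,y\in G$ with $|x|_G\leq|y|_G$. Since $G$ is $\tau$-homogeneous, $\chi_G(x)$ and $\chi_G(y)$ are equivalent characteristics; hence they carry the value $\infty$ at the same primes and differ at only finitely many of the remaining primes, by finite amounts, so $\delta:=\chi_G(y)-\chi_G(x)$ is a non-negative, finitely supported characteristic. Putting $n:=\prod_p p^{\delta_p}\in\Z$, one has $|y|_p=|x|_p+\delta_p\geq\delta_p=|n|_{\Z,p}$ for every prime $p$, whence $y\in nG$ by torsion-freeness; writing $y=nz$ gives $\chi_G(z)=\chi_G(y)-\delta=\chi_G(x)$. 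Now transitivity of $G$ supplies $\sigma\in\mathrm{Aut}(G)$ with $\sigma(x)=z$, and then $n\sigma\in\mathrm{E}(G)$ satisfies $(n\sigma)(x)=nz=y$, as required.

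\emph{Step 2: if $G$ is homogeneous and fully transitive, then so is $G^{(\alpha)}$ for every $\alpha$.} By a routine support argument (an endomorphism of a finite sub-sum $G^{(k)}$ carrying $x$ to $y$ extends by $0$ to all of $G^{(\alpha)}$, heights being computed inside a direct summand) it suffices to treat a finite power $G^{(k)}$, $k\geq 1$. The crux is the claim: \emph{if $g_1,\dots,g_k,h\in G$ and $\chi_G(h)\geq\inf_j\chi_G(g_j)$, then $h=\sum_j\phi_j(g_j)$ for suitable $\phi_j\in\mathrm{E}(G)$}; applying this coordinate-wise to the entries of a prospective target turns any $x=(g_1,\dots,g_k)$ into any $y=(h_1,\dots,h_k)$ with $\chi_{G^{(k)}}(x)=\inf_j\chi_G(g_j)\leq\chi_{G^{(k)}}(y)$. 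To prove the claim, discard the zero $g_j$'s and let $F$ be the \emph{finite} set of primes at which $\chi_G(h),\chi_G(g_1),\dots,\chi_G(g_k)$ do not all agree (finite, by homogeneity of $G$); for each $p\in F$ choose $\iota(p)$ with $|g_{\iota(p)}|_p\leq|h|_p$, set $F_j=\iota^{-1}(j)$ and $d_j=\prod_{p\in F\setminus F_j}p^{\max_i|g_i|_p}\in\Z$. A primewise check (distinguishing $p\in F_j$, $p\in F\setminus F_j$, and $p\notin F$) gives $\chi_G(g_j)\leq\chi_G(d_j h)$, so full transitivity of $G$ yields $\psi_j\in\mathrm{E}(G)$ with $\psi_j(g_j)=d_j h$; and since the $F_j$ partition $F$, no prime divides every $d_j$, i.e.\ $\gcd(d_1,\dots,d_k)=1$, so choosing integers $c_j$ with $\sum_j c_jd_j=1$ we obtain $\sum_j(c_j\psi_j)(g_j)=\big(\sum_j c_jd_j\big)h=h$.

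Combining the two steps: $G$ homogeneous transitive $\Rightarrow$ $G$ fully transitive $\Rightarrow$ $G^{(\alpha)}$ homogeneous fully transitive $\Rightarrow$ $G^{(\alpha)}$ transitive (by Proposition~\ref{7} for $\alpha\geq 2$, and trivially for $\alpha=1$). I expect Step 2 — specifically the construction of the $\phi_j$'s that realize a single target $h$ from several sources $g_j$ when no individual $g_j$ dominates $h$ but their infimum does — to be the real obstacle, since it is exactly here that both the homogeneity of $G$ (to keep $F$ finite) and its full transitivity (invoked several times, together with the elementary coprimality device) are genuinely needed; the remaining steps are bookkeeping.
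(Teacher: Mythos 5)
Your proof is correct and follows essentially the same route as the paper's: establish that $G^{(\alpha)}$ is (homogeneous and) fully transitive, and then convert this into transitivity via the $2\times 2$-matrix mechanism of Lemma~\ref{3} --- which you access through Proposition~\ref{7}, while the paper inlines the same argument. The only real difference is that you supply complete proofs of the two ingredients the paper merely cites from \cite{KMT} (that a homogeneous transitive group is fully transitive, and that full transitivity passes to direct sums of copies of a homogeneous group), and your coprimality construction in Step~2 is a correct rendering of the cited exercise.
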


\begin{proof} It is well known that any homogeneous transitive group is fully transitive (see, e.g., \cite[Lemma~25.2, Lemma 40.1]{KMT}). So, $G^{(\alpha)}$ is fully transitive with the aid of \cite[Exersice~12 from \S25]{KMT}.
Letting now $G^{(\alpha)}=G\oplus A$, where $A\cong G^{(\alpha-1)}$, it must be that $|x|_G=|y|_G$, $x=g+a$, $y=g'+a'$, $g,g'\in G$ and $a,a'\in A$. Also, let $g''\in G$, $a''\in A$ and $|g''|_G=|a''|_G=|x|_G$. Then, one verifies that $|g''|_G\leq \mathrm{min}(|a-a''|_G,|a'-a''|_G)$ and $|a''|\leq \mathrm{min}(|g-g''|_G,|g'-g''|_G$. So, Lemma~\ref{3} is a guarantor that $\varphi(g''+a'')=x$ and $\psi(g''+a'')=y$ hold for some $\varphi,\psi\in\mathrm{Aut}(G)$. Finally, it is a routine technical exercise to verify that $\psi\varphi^{-1}(x)=y$, and so we are done.
\end{proof}

\begin{remark}\label{comment} It follows also from \cite[Lemma~25.2]{KMT} that full transitivity and Krylov transitivity are equivalent properties for homogeneous torsion-free groups.
\end{remark}

What we can next offer are the following two statements:

\begin{corollary}\label{9} If $G$ is a homogeneous torsion-free group, then $G$ is fully transitive if, and only if,
$G^{(\alpha)}$ is transitive for some ordinal $\alpha>1$ (equivalently, for all ordinals $\alpha>1$).
\end{corollary}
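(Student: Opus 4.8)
The plan is to derive both implications almost immediately from results already in hand, so that the corollary reduces to combining Lemma~\ref{1} and Proposition~\ref{7}. For the direction ``$G$ fully transitive $\Rightarrow$ $G^{(\alpha)}$ transitive for every $\alpha>1$'', we may assume $G\neq 0$, say $G$ is $\tau$-homogeneous. The first step is to note that $G^{(\alpha)}$ is again $\tau$-homogeneous: in any direct sum the $p$-height of a nonzero sum is the coordinatewise minimum of the $p$-heights of its nonzero components, and the coordinatewise minimum of finitely many characteristics each equivalent to a fixed $\chi$ is again equivalent to $\chi$ (recall two characteristics are equivalent precisely when they agree off a finite set on which both are finite). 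Hence every nonzero element of $G^{(\alpha)}$ has type $\tau$. The second step invokes \cite[Exercise~12 from \S25]{KMT} (exactly as in the proof of Lemma~\ref{8}) to conclude that $G^{(\alpha)}$ is fully transitive. Finally, since $\alpha>1$ the group $G^{(\alpha)}$ is a direct sum of two nonzero summands, hence decomposable, so Proposition~\ref{7} applies and gives that $G^{(\alpha)}$ is transitive. This proves the conclusion for \emph{all} $\alpha>1$ simultaneously, which is exactly what the parenthetical ``equivalently, for all ordinals $\alpha>1$'' requires.

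For the converse, suppose $G^{(\alpha)}$ is transitive for some ordinal $\alpha>1$. Every automorphism of $G^{(\alpha)}$ is an endomorphism, so $G^{(\alpha)}$ is in particular Krylov transitive; applying Lemma~\ref{1} with $\mathfrak{m}=\alpha>1$ then yields that $G$ is fully transitive. Combining the two directions gives the asserted equivalence, together with its parenthetical strengthening.

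I do not expect a real obstacle. The only points that need a line of justification are that homogeneity of a given type is inherited by arbitrary direct sums (a routine height computation) and that full transitivity transfers from $G$ to $G^{(\alpha)}$ (for which we merely quote the pertinent exercise from \cite{KMT}). All the substantive content is already encapsulated in Lemma~\ref{1}, which supplies the step from Krylov transitivity of $G^{(\alpha)}$ to full transitivity of $G$, and in Proposition~\ref{7}, which supplies the step from ``decomposable homogeneous fully transitive'' to ``transitive''; the corollary simply observes that transitivity of $G^{(\alpha)}$ sits between the hypothesis of the latter and a consequence of the former.
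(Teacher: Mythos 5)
Your proof is correct. The converse direction (transitivity of $G^{(\alpha)}$ for some $\alpha>1$ implies full transitivity of $G$) is exactly the paper's argument: an automorphism is in particular an endomorphism, so $G^{(\alpha)}$ is Krylov transitive and Lemma~\ref{1} applies. For the forward direction the paper simply asserts that it ``follows directly from Proposition~\ref{4}'', whereas you route it through Proposition~\ref{7}. Your choice is arguably the more accurate one: Proposition~\ref{4} as literally stated concerns products of rank-one groups of equal type and does not apply verbatim to $G^{(\alpha)}$ for an arbitrary homogeneous fully transitive $G$; what is really being invoked is the Lemma~\ref{3} mechanism common to the proofs of Propositions~\ref{4} and~\ref{7}, and Proposition~\ref{7} is the version whose hypotheses (decomposable, homogeneous, fully transitive) you can actually verify for $G^{(\alpha)}$. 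The two supporting facts you supply to make this work --- that $G^{(\alpha)}$ is again $\tau$-homogeneous, because a coordinatewise minimum of finitely many characteristics equivalent to $\chi$ is equivalent to $\chi$, and that full transitivity passes from $G$ to $G^{(\alpha)}$ via the cited exercise of \cite{KMT} --- are precisely the steps the paper itself uses in the proof of Lemma~\ref{8}, so your argument stays inside the paper's toolkit while making the forward implication fully explicit and simultaneously establishing the ``for all $\alpha>1$'' strengthening.
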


\begin{proof} If $G^{(\alpha)}$ is transitive for some ordinal $\alpha>1$, then $G$ is fully transitive owing to Lemma~\ref{1}. The converse implication follows directly from Proposition~\ref{4}.
\end{proof}

\begin{proposition}\label{10} If $G$ is a homogeneous torsion-free transitive group, then the product $\prod_{\alpha}G$ is transitive for all ordinals $\alpha>1$.
\end{proposition}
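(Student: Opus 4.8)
The plan is to split according to whether $\alpha$ is finite. If $\alpha$ is finite then $\prod_\alpha G=G^{(\alpha)}$, and the assertion is precisely Lemma~\ref{8}; so assume from now on that $\alpha$ is infinite, put $\lambda=|\alpha|$ and $P=\prod_\alpha G\cong\prod_\lambda G$, and, since $\lambda=\lambda+\lambda$, split the index set into two disjoint blocks of cardinality $\lambda$ to obtain a decomposition $P=A\oplus B$ with $A\cong B\cong P$. The proof then hinges on two facts: (1)~$P$ is itself fully transitive; and (2)~the ``halving'' manoeuvre based on Lemma~\ref{3}.

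Granting (1), step (2) proceeds just as in the proofs of Propositions~\ref{4} and~\ref{7} and of Lemma~\ref{8}. Given $x,y\in P$ with $|x|_P=|y|_P=:\chi$, write $x=a+b$ and $y=a_1+b_1$ in $P=A\oplus B$, so that $|a|_A,|b|_B,|a_1|_A,|b_1|_B\geq\chi$; after adjusting $x$ and $y$ by suitable automorphisms of $P$ one may pass to a ``diagonal'' element $x'=a'+b'$ with $|a'|_A=|b'|_B=\chi$ (here $A\cong B\cong P$ is used to produce elements of $A$ and $B$ of the prescribed height), and then the height comparisons required by Lemma~\ref{3} hold, giving $\varphi,\psi\in\mathrm{Aut}(P)$ with $\varphi(x)=x'=\psi(y)$; consequently $\psi^{-1}\varphi\in\mathrm{Aut}(P)$ carries $x$ onto $y$. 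The only wrinkle not already present in those earlier proofs is the slightly more careful choice of the intermediate element, for which the isomorphism $A\cong B\cong P$ leaves enough room.

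The heart of the argument — and the step I expect to be the main obstacle — is fact (1), that $P=\prod_\alpha G$ is fully transitive. Being homogeneous and transitive, $G$ is fully transitive of some type $\tau$ by \cite[Lemma~25.2, Lemma~40.1]{KMT}. Since $\mathrm{End}(P)\cong\prod_\gamma\mathrm{Hom}(P,G)$, an endomorphism of $P$ is simply an arbitrary family of homomorphisms $P\to G$ indexed by the coordinates; hence, for $x,y\in P$ with $|x|_P\leq|y|_P$, it suffices to realize each $y_\gamma$ as the image of $x$ under some $f_\gamma\in\mathrm{Hom}(P,G)$, and, as $|y_\gamma|_G\geq|y|_P\geq|x|_P$, it is enough to show that every $g\in G$ with $|g|_G\geq|x|_P$ is a $\mathrm{Hom}(P,G)$-image of $x$. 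Using the maps $h\pi_\beta$ (for $\beta<\alpha$, $h\in\mathrm{End}(G)$, $\pi_\beta$ the $\beta$-th projection) and their finite sums, and the identity $\mathrm{End}(G)g'=G(|g'|_G)$ valid in any fully transitive group, this reduces to the subgroup equality
$$ G\bigl(\textstyle\inf_\beta|x_\beta|_G\bigr)=\sum_\beta G(|x_\beta|_G), $$
where $x=(x_\beta)_\beta$ and both the sum and the infimum range over the coordinates with $x_\beta\neq 0$. The inclusion $\supseteq$ is immediate. For $\subseteq$ one uses homogeneity: each $|x_\beta|_G$, and also $|g|_G$, is equivalent to the canonical characteristic $\chi_\tau$ of $\tau$, hence agrees with $\chi_\tau$ off a finite set of primes; consequently, for a given $g$ with $|g|_G\geq\inf_\beta|x_\beta|_G$ one can select a \emph{finite} set $F$ of coordinates with $\min_{\beta\in F}|x_\beta|_G\leq|g|_G$, reducing matters to the finite version $G\bigl(\min_{\beta\in F}\chi_\beta\bigr)=\sum_{\beta\in F}G(\chi_\beta)$. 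This in turn reduces, by induction on $|F|$, to the two-term identity $G(\chi)+G(\chi')=G(\min(\chi,\chi'))$ for characteristics $\chi,\chi'$ equivalent to $\chi_\tau$, which one proves directly: given $g\in G(\min(\chi,\chi'))$, choose integers $n,m$ with $ng\in G(\chi)$ and $mg\in G(\chi')$ and with prime supports finite and disjoint — disjointness being forced by $|g|_G\geq\min(\chi,\chi')$ — and then express $g$ as a $\mathbb{Z}$-combination of $ng$ and $mg$ via Bézout. Reassembling $f=(f_\gamma)_\gamma$ then supplies the desired endomorphism of $P$. The only genuinely delicate point in all of this is the finite bookkeeping over the exceptional primes; alternatively, one may sidestep it by quoting a ready-made statement that a direct product of copies of a homogeneous fully transitive group is fully transitive, in parallel with the exercise from \S 40 of \cite{KMT} invoked for Proposition~\ref{4}.
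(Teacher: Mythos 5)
Your proof follows the same two\-/pronged route as the paper: the finite case is delegated to Lemma~\ref{8}, and the infinite case re\-/runs the halving argument of Proposition~\ref{4} (split $\prod_\alpha G$ as $A\oplus B$ with $A\cong B\cong\prod_\alpha G$, pass through a diagonal element $x'$ both of whose components have the minimal height, and apply Lemma~\ref{3} twice). The one place where you genuinely diverge is the full transitivity of $P=\prod_\alpha G$: the paper disposes of this by citing \cite[Exercise~4 from \S 40]{KMT} (see the remark following the proposition), whereas you prove it from scratch by identifying $\mathrm{End}(P)$ with $\prod_\gamma\mathrm{Hom}(P,G)$ and reducing everything to the identity $G(\min(\chi,\chi'))=G(\chi)+G(\chi')$ for characteristics equivalent to the type of $G$, established via disjoint prime supports and B\'ezout. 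That argument is correct --- homogeneity is exactly what makes the exceptional prime sets finite, the two supports disjoint, and the passage to a finite set of coordinates legitimate --- and it is more informative than the citation, since it isolates precisely where homogeneity enters. On the transitivity step itself you are no more and no less detailed than the paper: both you and the authors assert that ``the height comparisons required by Lemma~\ref{3} hold'' for the pair $(x,x')$, although, as literally stated, Lemma~\ref{3} also demands a height bound on a component of the target $x$ (not only of the diagonal element $x'$), so some extra care in choosing $a'$ and $b'$ --- which your phrase ``after adjusting $x$ and $y$ by suitable automorphisms'' gestures at --- is needed to make this completely watertight; that gloss is inherited verbatim from the paper's own Propositions~\ref{4} and~\ref{7} and is not a defect peculiar to your write\-/up.
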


\begin{proof} If the ordinal $\alpha$ is finite, then the assertion follows immediately from Lemma~\ref{8} since the product is isomorphic to the direct sum, while for infinite ordinals $\alpha$ we may apply the idea proposed in Proposition~\ref{4}.
\end{proof}

\begin{remark} As already commented above, the group in Proposition~\ref{10} is also fully transitive, so that the group $\prod_{\alpha}G$ is also fully transitive by exploiting \cite[Exersize~4 from \S40]{KMT}.
\end{remark}

It is worthwhile noticing that the direct summand of a (torsion-free) Krylov transitive group retains the same property. However, we will now improve this observation in the following quite non-trivial form as follows:

\begin{proposition}\label{11} $\mathrm{(1)}$ If we have a Krylov transitive torsion-free group $G\cong A^{(\mathfrak{m})}$ or $G\cong \prod_{\mathfrak{m}} A$ for some group $A$ with infinite ordinal $\mathfrak{m}$, then $G$ is fully transitive.

$\mathrm{(2)}$ If we have a Krylov transitive torsion-free group $G\cong A^{(m)}$ for some group $A$, where $m>1$ is a finite ordinal, then the group $A^{(n)}$ is fully transitive for every number $n$ with $1\leq n<m$ and, in particular, the group $A$ is fully transitive as well.
\end{proposition}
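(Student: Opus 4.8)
\emph{Overview and Part~(1).} The driving force behind both parts is Lemma~\ref{1}: as soon as a power $H^{(k)}$ with $k\ge 2$ is Krylov transitive, $H$ itself is fully transitive. For part~(1), observe that when $\mathfrak{m}$ is infinite the group $G$ is self-similar, namely $G\cong G\oplus G=G^{(2)}$: indeed $A^{(\mathfrak{m})}\oplus A^{(\mathfrak{m})}\cong A^{(\mathfrak{m})}$ and $\prod_{\mathfrak{m}}A\oplus\prod_{\mathfrak{m}}A\cong\prod_{\mathfrak{m}}A$, since $\mathfrak{m}$ and $\mathfrak{m}+\mathfrak{m}$ have the same cardinality. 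Hence $G^{(2)}\cong G$ is Krylov transitive, and Lemma~\ref{1} (with ``$G$'' in the role of the base group and exponent $2$) shows that $G$ is fully transitive. This settles part~(1).

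\emph{Part~(2), the range $1\le n\le m/2$.} Here $(A^{(n)})^{(2)}=A^{(2n)}$ is a direct summand of $G\cong A^{(m)}$ and is therefore Krylov transitive, so Lemma~\ref{1} yields that $A^{(n)}$ is fully transitive. In particular, taking $n=1$ (legitimate since $m\ge 2$), the group $A$ itself is fully transitive; this will be used below.

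\emph{Part~(2), the range $m/2<n<m$.} Write $G=H_1\oplus H_2$ with $H_1\cong A^{(n)}$, $H_2\cong A^{(m-n)}$ and $1\le m-n<n$, and take $x,y\in H_1$ with $|x|_G\le |y|_G$. Mimicking the proof of Lemma~\ref{1}, it suffices to produce $y'\in H_2$ with $|y'|_G=|y|_G$: then $|x+y'|_G=|x|_G$, Krylov transitivity of $G$ gives $f\in\mathrm{E}(G)$ with $f(x)=x+y'$, the projection $\pi\colon G\to H_2$ gives $\pi f(x)=y'$, and any homomorphism $\psi\colon A^{(m-n)}\cong H_2\to H_1\cong A^{(n)}$ with $\psi(y')=y$ then makes $\psi\pi f$, corestricted to $H_1$, an endomorphism of $A^{(n)}$ carrying $x$ to $y$.

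\emph{The crux.} Supplying $y'$ (and with it $\psi$) is the only delicate step. If $m-n\ge n$ one simply inserts a copy of $y$ into $H_2$ -- essentially the first range again. When $m-n<n$, however, $|y|_G$ is the componentwise minimum of the $n$ characteristics $|y_1|_A,\dots,|y_n|_A$ of the coordinates of $y=(y_1,\dots,y_n)$, while $H_2\cong A^{(m-n)}$ offers only $m-n<n$ coordinates in which to reproduce it. This is where the full transitivity of $A$, established above, enters: in a fully transitive group the fully invariant subgroup generated by $a$ equals $\{b:|b|_A\ge|a|_A\}$, and the lattice of such subgroups is compatible enough with meets that $\bigwedge_{j}|y_j|_A$ is already realized in $A$ (hence in $A^{(m-n)}$), after which a single well-chosen coordinate of the resulting $y'$ can be spread -- via endomorphisms of $A$ furnished by full transitivity -- onto all $n$ coordinates of $y$, producing $\psi$. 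Verifying this compatibility, namely that $\{b:|b|_A\ge\bigwedge_j|y_j|_A\}$ is the sum of the subgroups $\{b:|b|_A\ge|y_j|_A\}$, is the technical heart of the argument; the remainder is the same height bookkeeping in direct sums that appears in Lemma~\ref{1}.
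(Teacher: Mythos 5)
Your part (1) and the range $1\le n\le m/2$ of part (2) are fine and essentially follow the paper's route (self-similarity $G\cong G\oplus G$ plus Lemma~\ref{1}; summands of Krylov transitive groups stay Krylov transitive). The problem is the range $m/2<n<m$, where there is a genuine gap exactly at the point you flag as the crux. You need $y'\in H_2\cong A^{(m-n)}$ with $|y'|_G=|y|_G=\bigwedge_{j=1}^{n}|y_j|_A$, i.e.\ you must realize a meet of $n$ characteristics of elements of $A$ using only $m-n<n$ coordinates; already for $m=3$, $n=2$ this means realizing $\chi(y_1)\wedge\chi(y_2)$ by a \emph{single} element of $A$. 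Full transitivity of $A$ does not deliver this: the identity $\{b:|b|_A\ge\bigwedge_j|y_j|_A\}=\sum_j\{b:|b|_A\ge|y_j|_A\}$, even if established, only produces elements whose characteristic is $\ge$ the meet, not equal to it; and exact realization of meets genuinely fails in fully transitive groups. For instance, in Dobrusin's quasi-homogeneous, transitive and fully transitive group used in the First Construction of Theorem~\ref{15}, every type of a nonzero element is maximal in the typeset, so for $y_1,y_2$ of incomparable types no element can have characteristic $\chi(y_1)\wedge\chi(y_2)$. Hence any repair of your step would have to exploit the Krylov transitivity of $A^{(m)}$ itself at this point, which your argument does not do.

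The paper sidesteps the issue by never trying to hit $y$ in one shot. Writing the target as $b=b_1+\cdots+b_n$ with $b_j\in A_j$ and observing that $|b_j|_G\ge|b|_G\ge|a|_G$ for each $j$, it builds for each $j$ separately an endomorphism $f_j$ with $f_j(a)=b_j$ and then takes $f_1+\cdots+f_n$. Each $f_j$ needs only one auxiliary copy $A_{n+1}$: choose $a_{n+1}\in A_{n+1}$ with $|a_{n+1}|_G=|b_j|_G$ (trivially possible, since this is the characteristic of a single element of $A$), use Krylov transitivity of $G$ to send $a$ to $a+a_{n+1}$, project onto $A_{n+1}$, and use Krylov transitivity once more to send $a_{n+1}$ to $b_j$. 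No meet of characteristics ever has to be realized. Replacing your second range by this coordinate-by-coordinate argument closes the gap.
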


\begin{proof} (1) Since the ordinal $\mathfrak{m}$ is infinite, one writes that $G=B\oplus C$, where $G\cong B,C$. Applying Lemma~\ref{1}, both groups $B$ and $C$ are fully transitive, so $G$ is also fully transitive, as asserted.

(2) Let us write $$G=(A_1\oplus\dots\oplus A_n)\oplus (A_{n+1}\oplus\dots\oplus A_m),$$
where for all indices $i\in [1,m]$ the isomorphism $A_i\cong A$ holds, and $$|b=b_1+\dots+b_n|_G\geq |a=a_1+\dots+a_n|_G$$ with $a_j,b_j\in A_j$, $j=1,\dots,n$. If for a moment $f_j(a)=b_j$, then $(f_1+\dots+f_n)(a)=b$ holds, thus showing that $f_j$ really exists. Choosing $a_{n+1}\in A_{n+1}$ such that $|a_{n+1}|_G=|b_1|_G$, we deduce that $|a_1+\dots+a_n|_G=|a_1+\dots+a_n+a_{n+1}|_G$ and hence $f(a_1+\dots+a_n)=a_1+\dots+a_n+a_{n+1}$ holds for some $f\in\mathrm{E}(G)$. If, however, $\pi:G\to A_{n+1}$ is the corresponding projection, then it follows at once that $\pi f(a_1+\dots+a_n)=a_{n+1}$, whence the element $a_{n+1}$ will endomorphically be translated in the element $b_1$, as required.

The second part is now an immediate consequence of the first one.
\end{proof}

Let us notice that, in the case of finite ordinals, point (2) from the last assertion somewhat improves Lemma~\ref{1} quoted above.

\medskip

The following technical claim is at all similar to Theorem~\ref{6}, but we include it here without an explicit proof only for completeness of the exposition and for the reader's convenience.

\begin{proposition}\label{12} Let $G_i$, $i\in I$, be a set of homogeneous transitive groups $G_i$. Then the group $G=\prod_{i\in I}G_i$ is transitive if, and only if, $G=\prod_{j\in J}A_j$, where every
$A_j$ is a product of some direct components $G_j$ of the same type, and $\pi(A_j)\cap\pi(A_s)=\emptyset$ provided $j\neq s$, $j,s\in J$.
\end{proposition}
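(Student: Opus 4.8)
The plan is to mimic the proof of Theorem~\ref{6} as closely as possible, with the single substitution that the rank-$1$ building blocks there are replaced here by the homogeneous transitive groups $G_i$. Accordingly, I would first dispose of the easy direction. Suppose $G=\prod_{j\in J}A_j$ where each $A_j=\prod_{i\in I_j}G_i$ is a product of those components sharing a common type $\tau_j$, and $\pi(A_j)\cap\pi(A_s)=\emptyset$ for $j\neq s$. By Proposition~\ref{10} (or by Remark~\ref{comment} together with Proposition~\ref{4}, noting each $G_i$ in $A_j$ is homogeneous transitive, hence fully transitive, hence a homogeneous fully transitive group of type $\tau_j$), each $A_j$ is transitive; indeed the argument of Proposition~\ref{4} applies verbatim once one observes that a product of homogeneous transitive groups of a fixed type $\tau$ is homogeneous of type $\tau$ and fully transitive, so Lemma~\ref{3} can be invoked after splitting the index set into two halves of equal cardinality. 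Then Proposition~\ref{5}, applied to the factors $A_j$ (which are quasi-homogeneous — in fact homogeneous — transitive with pairwise disjoint $\pi$-sets), yields that $G=\prod_{j\in J}A_j$ is transitive.

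For the forward implication, I would assume $G=\prod_{i\in I}G_i$ is transitive and argue that two components $G_i$, $G_k$ of distinct type must have disjoint $\pi$-sets. Here is where the main obstacle sits: unlike in Theorem~\ref{6}, the $G_i$ are not rank-$1$, so one cannot directly quote \cite[Theorem~3.1]{D}, which concerns rank-$1$ summands. The remedy is to extract a rank-$1$ summand from each homogeneous transitive $G_i$. A homogeneous transitive group of type $\tau$ need not be completely decomposable, but one can still find, for a suitable characteristic $\chi\in\tau$, an element whose height sequence is exactly $\chi$ and, more to the point, one can reduce to rank-$1$ summands by passing to an appropriate pure subgroup or by invoking the structure of homogeneous transitive groups (each such group $G_i$ has, for every prime $p$, $pG_i=G_i$ or $G_i/pG_i\neq 0$ according to whether $p\notin\pi(\tau_i)$ or $p\in\pi(\tau_i)$). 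Concretely: if $p\in\pi(\tau_i)\cap\pi(\tau_k)$ with $\tau_i\neq\tau_k$, choose $x_i\in G_i$ and $x_k\in G_k$ with $\chi_{G_i}(x_i)=\chi_i$, $\chi_{G_k}(x_k)=\chi_k$, where $\chi_i\in\tau_i$, $\chi_k\in\tau_k$ are chosen so that $\chi_i$ and $\chi_k$ first differ at some prime other than $p$ while agreeing (or being comparable) at $p$; then build two elements of $G$ with equal height sequences but such that no automorphism of $G$ can carry one to the other, because the "type data" is rigidly distributed among incompatible components — this contradicts transitivity. The cleanest route is probably to reuse \cite[Theorem~3.1]{D} after first showing that transitivity of $G$ forces transitivity of $G_i\oplus G_k$ (a direct summand of a transitive group need not be transitive in general, but here the disjointness obstruction can be detected in a two-factor subproduct by the same height-sequence construction), and then noting that each homogeneous transitive $G_i$ contains a rank-$1$ pure summand of type $\tau_i$ when $\tau_i$ is idempotent, or otherwise arguing directly with height sequences.

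Finally, once disjointness of $\pi$-sets across distinct types is established, I would group the index set $I$ by type: let $\{\tau_j:j\in J\}$ be the distinct types occurring among the $G_i$, set $I_j=\{i\in I:\tau(G_i)=\tau_j\}$ and $A_j=\prod_{i\in I_j}G_i$. Then $G=\prod_{j\in J}A_j$, each $A_j$ is a product of components of a single type $\tau_j$, and $\pi(A_j)=\pi(\tau_j)$ is disjoint from $\pi(A_s)=\pi(\tau_s)$ for $j\neq s$, which is exactly the asserted decomposition. Since the authors explicitly say they omit the explicit proof "for completeness and the reader's convenience," I would keep the write-up at the level of indicating these reductions and pointing to Theorem~\ref{6}, Proposition~\ref{5} and \cite[Theorem~3.1]{D} as the substantive ingredients, flagging the rank-$1$-reduction step as the only genuinely new point beyond the rank-$1$ case.
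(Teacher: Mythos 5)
Your ``if'' direction follows the paper's route (Propositions~\ref{5} and \ref{10}), but the ``only if'' direction --- which is the substantive half --- is where your proposal has a genuine gap, and it is precisely the gap the paper's argument is designed to avoid. You propose to reduce to rank-$1$ summands of the $G_i$ so as to quote \cite[Theorem~3.1]{D}, after first establishing transitivity of a two-factor subproduct $G_i\oplus G_k$. Neither step is available: a homogeneous transitive group need not possess any rank-$1$ direct summand (you hedge this with ``when $\tau_i$ is idempotent, or otherwise arguing directly with height sequences,'' but that ``otherwise'' branch is exactly the missing proof), and, as you yourself concede, transitivity does not pass to direct summands, so ``the disjointness obstruction can be detected in a two-factor subproduct'' is an assertion rather than an argument. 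The paper never leaves $G$ and never touches rank-$1$ pieces. It argues directly: if $A$, $B$ are homogeneous components with $t(A)\neq t(B)$, relabelled so that $t(B)\not\leq t(A)$, and $p\in\pi(A)\cap\pi(B)$, choose $a\in A\setminus pA$ and $b\in B\setminus pB$. A coordinatewise height computation gives $\chi_G(pa+b)=\chi_G(a+b)$, so transitivity of $G$ yields $f\in\mathrm{Aut}(G)$ with $f(pa+b)=a+b$. Applying the projection $\pi:G\to A$ gives $p\pi f(a)+\pi f(b)=a$; but every nonzero element of $\pi f(B)$ would have type $\geq t(B)$ while lying in the homogeneous group $A$ of type $t(A)\not\geq t(B)$, so $\pi f(B)=0$ and hence $p\pi f(a)=a$, contradicting $a\notin pA$. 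No structure theory of the $G_i$ beyond homogeneity is used.

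A secondary point: your auxiliary claim that a product of homogeneous transitive groups of a fixed type $\tau$ is again homogeneous of type $\tau$ is false for infinite products. For instance, with $\chi=(1,1,1,\dots)$ and $G_i={\mathbb Q}_{\chi^{(i)}}$, where $\chi^{(i)}$ is obtained from $\chi$ by replacing the $i$-th entry by $0$, every $G_i$ has type $\overline{\chi}$, yet the element whose every coordinate is $1$ has characteristic $\mathbf{0}\notin\overline{\chi}$. So the easy direction should be routed through full transitivity of the products $A_j$ and Lemma~\ref{3} (as in Proposition~\ref{4}), together with Proposition~\ref{5}, rather than through homogeneity of the product itself.
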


We, however, can reformulate this statement in an equivalent and more compact form and give it a complete proof as follows:

\medskip

{\it Let $\{G_i\}_{i\in I}$, where $I$ is an index set, be a family of homogeneous transitive groups $G_i$. Then $G=\prod_{i\in I}G_i$ is transitive if, and only if, $\pi(G_i)\cap\pi(G_i)=\emptyset$, provided $t(G_i)\neq t(G_j)$, for all $i,j\in I$}.

\medskip

\begin{proof} In view of Propositions~\ref{5} and \ref{10}, it is sufficient to show that for every direct summand $A\oplus B$ of the group $G$ with homogeneous components $A$, $B$ the condition $t(A)\neq t(B)$ implies $\pi(A)\cap\pi(B)=\emptyset$. To that goal, assume that $p\in\pi(A)\cap\pi(B)$, that either $t(B)>t(A)$ or $t(B)$, $t(A)$ are incomparable, and that $a\in A\setminus pA$, $b\in B\setminus pB$. Therefore, $|pa+b|_G=|a+b|_G$ and hence $f(pa+b)=a+b$ holds for some $f\in\mathrm{Aut}(G)$. If now $\pi:G\to A$ is the corresponding projection, then $p\pi f(a)+\pi f(b)=a$. But the equality $\pi f(B)=0$ follows in view of our assumption on types $t(A)$, $t(B)$, so that we obtain $p\pi f(a)=a$ which, however, contradicts the condition $a\in A\setminus pA$, as required.
\end{proof}

About an analogous variant of Proposition~\ref{12} related to direct sums of transitive and fully transitive homogeneous group, we refer the interested reader for more detailed information to the sources \cite[Exersize~12 from \S 25; Lemma~42.1, Exersize~6 from \S 40; Exersize~3 from \S 42]{KMT}).

\medskip

As an immediate consequence, we derive the following:

\begin{corollary}\label{13} The vector torsion-free Krylov transitive group is always fully transitive.
\end{corollary}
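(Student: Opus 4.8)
The plan is to exploit Krylov transitivity to force the homogeneous constituents of the vector group to have pairwise disjoint sets of relevant primes, and then to reduce to the homogeneous case settled by Proposition~\ref{4}. So, write the given Krylov transitive vector group as $G=\prod_{i\in I}G_i$ with each $G_i$ of rank $1$, partition the index set by type, $I=\bigsqcup_{\tau}I_\tau$ with $i\in I_\tau$ precisely when $\t(G_i)=\tau$, and put $A_\tau=\prod_{i\in I_\tau}G_i$. Then $G=\prod_\tau A_\tau$, and each $A_\tau$ is a $\tau$-homogeneous vector group, hence by Proposition~\ref{4} it is fully transitive (and transitive).

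The first and main step is to prove that $\pi(G_i)\cap\pi(G_j)=\emptyset$ whenever $\t(G_i)\ne\t(G_j)$. This is precisely the computation carried out in the proof of the reformulated Proposition~\ref{12}, and the point to notice is that that computation never really uses that the map it produces is an automorphism: an endomorphism does the job, so Krylov transitivity suffices. In detail, suppose $p\in\pi(G_i)\cap\pi(G_j)$ with $\t(G_i)\ne\t(G_j)$; since the types differ, one of them fails to be $\le$ the other, so after possibly interchanging $i$ and $j$ we may assume $\mathrm{Hom}(G_j,G_i)=0$. Choose $a\in G_i\setminus pG_i$ and $b\in G_j\setminus pG_j$; as $a$ and $b$ both have $p$-height $0$ in their rank-$1$ hosts, a coordinatewise height computation in $G=G_i\oplus G_j\oplus\prod_{k\ne i,j}G_k$ gives $|pa+b|_G=|a+b|_G$. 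Krylov transitivity then yields $f\in\mathrm{E}(G)$ with $f(pa+b)=a+b$; applying the projection $\pi\colon G\to G_i$ and using $\pi f(b)=0$ (because $\mathrm{Hom}(G_j,G_i)=0$) we get $p\,\pi f(a)=a$ with $\pi f(a)\in G_i$, contradicting $a\notin pG_i$. Consequently $\pi(A_\tau)\cap\pi(A_\sigma)=\emptyset$ for all distinct types $\tau,\sigma$, since $\pi(A_\tau)=\bigcup_{i\in I_\tau}\pi(G_i)$.

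The second step extracts full transitivity of $G$ from this disjointness. Let $x=(x_\tau)_\tau$ and $y=(y_\tau)_\tau$ lie in $G=\prod_\tau A_\tau$ with $\chi_G(x)\le\chi_G(y)$. For a prime $q$, the disjointness means $q\in\pi(A_\tau)$ for at most one $\tau$, and for every other $\sigma$ the group $A_\sigma$ is $q$-divisible, so $|x_\sigma|_{A_\sigma,q}=|y_\sigma|_{A_\sigma,q}=\infty$; since heights in a product are computed coordinatewise, $\chi_G(x)\le\chi_G(y)$ therefore forces $|x_\tau|_{A_\tau,q}\le|y_\tau|_{A_\tau,q}$ for the exceptional $\tau$ as well, and hence for every $\tau$. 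Thus $\chi_{A_\tau}(x_\tau)\le\chi_{A_\tau}(y_\tau)$ for all $\tau$, full transitivity of $A_\tau$ supplies $\phi_\tau\in\mathrm{E}(A_\tau)$ with $\phi_\tau(x_\tau)=y_\tau$, and the resulting diagonal map $\phi=(\phi_\tau)_\tau\in\prod_\tau\mathrm{E}(A_\tau)\subseteq\mathrm{E}(G)$ satisfies $\phi(x)=y$. One may note in passing that the same disjointness, fed into Proposition~\ref{5} or the reformulated Proposition~\ref{12}, also makes $G$ transitive. I expect the only genuinely delicate points to be confirming that the proof of Proposition~\ref{12} goes through verbatim with an endomorphism in place of an automorphism — which it does, the automorphism hypothesis being used nowhere — and the routine but fiddly bookkeeping of heights in an infinite product; the vanishing $\mathrm{Hom}(G_j,G_i)=0$ for rank-$1$ groups with $\t(G_j)\not\le\t(G_i)$ is entirely standard.
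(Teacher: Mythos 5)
Your proof is correct and follows exactly the route the paper intends: Corollary~\ref{13} is stated there as an immediate consequence of the reformulated Proposition~\ref{12} (whose contradiction argument, as you rightly observe, uses the map $f$ only as an endomorphism, so Krylov transitivity suffices to force $\pi(G_i)\cap\pi(G_j)=\emptyset$ for factors of different type) combined with Proposition~\ref{4} and the coordinatewise height computation that the disjointness of the sets $\pi(A_\tau)$ makes possible. Nothing essential is missing.
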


\medskip

If $t$ is the type of a torsion-free group $A$ of rank $1$, then for shortness of the records we write $\pi(t)=\pi(A)$.

\medskip

So, we are ready to proceed by proving the following assertion. It somewhat enlarge \cite[Theorems 2.4, 2.13]{DG} in the torsion-free situation and is also in sharp contrast to Example~\ref{(4)} from the next section in which a Krylov transitive non-fully transitive group is successfully constructed.

\begin{proposition}\label{14} Let $A$ be such a torsion-free group that either the types of all its non-zero elements are comparable, or if $t_1$ incomparable with $t_2$ then $\pi(t_1)\cap\pi(t_2)=\emptyset$. Then the Krylov transitivity of the group $G=A^{(\mathfrak{m})}$ for finite or infinite ordinal $\mathfrak{m}>1$ implies the full transitivity of $G$.
\end{proposition}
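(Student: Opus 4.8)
The plan is to reduce the problem to the already-established Lemma~\ref{1}. That lemma tells us that if $G=A^{(\mathfrak m)}$ is Krylov transitive with $\mathfrak m>1$, then $A$ is fully transitive; so the content we still need is the passage from full transitivity of the single summand $A$ to full transitivity of the direct sum $G=A^{(\mathfrak m)}$. Under the hypothesis on types of $A$ this passage should hold, and the job is to make it precise. First I would fix $x,y\in G=\bigoplus_{i}A_i$ (with $A_i\cong A$) satisfying $|x|_G\le |y|_G$, and write $x=\sum_i x_i$, $y=\sum_i y_i$ with $x_i,y_i\in A_i$, only finitely many nonzero. Since heights in a direct sum are computed coordinatewise — $|x|_{G,p}=\min_i |x_i|_{A_i,p}$ — the inequality $|x|_G\le|y|_G$ says $\min_i|x_i|_p\le\min_j|y_j|_p$ for every prime $p$.

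The key step is to produce, for each coordinate $j$ with $y_j\ne 0$, an endomorphism $\psi_j\colon G\to A_j$ with $\psi_j(x)=y_j$; then $\psi=\sum_j\iota_j\psi_j$ (the sum over the finitely many relevant $j$) gives $\psi(x)=y$. To get $\psi_j$ it suffices to find a single coordinate $i=i(j)$ and an endomorphism $A_{i}\to A_{j}$ (equivalently an endomorphism of $A$) sending $x_{i}$ to $y_j$, i.e.\ with $|x_{i}|_A\le|y_j|_A$; composing with the projection $G\to A_{i}$ and using full transitivity of $A$ finishes that coordinate. So everything comes down to: \emph{for each $j$ there is an $i$ with $|x_i|_A\le|y_j|_A$ as characteristics (not merely up to the min over primes)}. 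This is exactly where the hypothesis on $A$ enters. If all types in $A$ are comparable, the characteristics $\{\chi(x_i)\}$ are pairwise comparable up to finite adjustments, hence one can locate an $x_i$ whose height is small enough; more carefully, one uses that $|x|_G=\min_i|x_i|$ forces, prime by prime, some $x_i$ to realize the minimum, and comparability lets us choose one $i$ that works simultaneously for all primes relevant to $y_j$. If instead incomparable types $t_1,t_2$ in $A$ have $\pi(t_1)\cap\pi(t_2)=\emptyset$, then the primes split according to type, and one handles $y_j$ separately on each "block" of primes, using on each block the coordinate $x_i$ that is minimal there; disjointness of the prime supports guarantees the local choices can be amalgamated into one endomorphism.

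The main obstacle is precisely this amalgamation: in a direct sum the global height being $\le$ something only gives, for each prime, \emph{some} coordinate achieving the bound, and a priori different primes could require different coordinates, so there need not be one coordinate of $x$ with characteristic $\le\chi(y_j)$ — this is what can fail for general $A$ and is responsible for Example~\ref{(4)}. The two hypotheses are tailored to defeat exactly this: comparability of types makes the "winning" coordinate essentially unique (up to equivalence) across all primes, while disjoint $\pi$'s for incomparable types let one build $\psi_j$ as an honest sum of partial maps, one per prime-block, since an endomorphism of $A$ that is required to behave one way on primes in $\pi(t_1)$ and another on primes in $\pi(t_2)$ exists when those prime sets do not overlap. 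Once the existence of the required coordinatewise maps is secured, assembling $\psi$ and checking $\psi(x)=y$ is routine, and the infinite-ordinal case follows from Lemma~\ref{1} together with the finite-support observation exactly as in the proof of Proposition~\ref{11}(1). I would also record explicitly that $G$ inherits Krylov transitivity on summands, so there is no circularity in invoking Lemma~\ref{1}.
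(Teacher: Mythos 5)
There is a genuine gap at the heart of your argument. Everything in your construction funnels through the claim that, whenever $|x|_G\le |y|_G$, for each coordinate $j$ with $y_j\ne 0$ one can find a \emph{single} coordinate $i$ with $\chi(x_i)\le \chi(y_j)$, so that $\psi_j$ may be taken to be ``project onto $A_i$, then apply an endomorphism of $A$.'' This claim is false even in the simplest comparable-type (indeed homogeneous) situation: take $A=\mathbb{Z}$, $G=\mathbb{Z}\oplus\mathbb{Z}$, $x=(2,3)$, $y=(1,0)$. Then $\chi_G(x)=\chi_G(y)=(0,0,\dots)$, but neither $\chi(2)$ nor $\chi(3)$ is $\le\chi(1)$, since $2$ has positive $2$-height and $3$ has positive $3$-height; the endomorphism that works, $(a,b)\mapsto (b-a,0)$, is an integer (B\'ezout) combination of the two projections, not a projection followed by a map of $A$. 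Your attempted rescue --- ``comparability lets us choose one $i$ that works simultaneously for all primes'' --- conflates comparability of \emph{types} with comparability of \emph{characteristics}: elements of the same type can have incomparable characteristics (differing at finitely many primes), and those finitely many primes are precisely where no single coordinate wins and a genuine multi-coordinate combination is needed. The incomparable-types case has the same vagueness: an endomorphism of $A$ cannot in general be ``glued'' from prescribed behaviours on disjoint prime blocks, and a sum of maps $A_{i_1}\to A_j$, $A_{i_2}\to A_j$ must land exactly on $y_j$, not merely match it prime by prime.

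A second, related issue is that after invoking Lemma~\ref{1} you discard the Krylov transitivity of $G$ entirely and try to work with full transitivity of $A$ alone; it is not at all clear that this weaker information suffices. The paper's proof keeps Krylov transitivity of the whole group $G$ in play throughout: it repeatedly applies an endomorphism of $G$ to replace $x$ by an element $x'$ of \emph{equal} characteristic whose support has been rearranged (a coordinate $a_s$ is traded for an element $a_j'$ in another copy $A_j$ of the same height, which leaves $\chi_G$ unchanged precisely because of the disjoint-$\pi$ or comparability hypothesis), and only then projects onto a single coordinate and maps it to the target. To salvage your route you would need either to prove the lifting ``$A$ fully transitive $\Rightarrow A^{(m)}$ fully transitive'' under the stated type hypothesis by an honest multi-coordinate argument (in the spirit of the map $f_1+\dots+f_n$ in Proposition~\ref{11}(2)), or to re-introduce the Krylov transitivity of $G$ itself as the paper does.
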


\begin{proof} In virtue of Proposition~\ref{11} (1), it suffices to consider the case of finite $\mathfrak{m}=m$. To this purpose, let we have $G=A_1\oplus\dots\oplus A_m$, where all $A_i\cong A$ and, as in Proposition~\ref{11}, the inequality $|b_1|_G\geq |a=a_1+\dots+a_m|_G$ is valid for all $a_i\in A_i$, $b_1\in A_1$. If some $t(a_j)$ and $t(a_s)$ are incomparable for $j\neq s$, then by condition it must be that $\pi(t(a_j))\cap\pi(t(a_s))=\emptyset$, so if now $a_j'\in A_j$ is an element such that $|a_j'|_G=|a_s|_G$, then $|a_j+a_j'|_G=|a_j+a_s|_G$. Therefore, if we replace the element $a_s$ by $a_j'$ in the decomposition of the element $a$, then we can get the element $a'$ with the property $|a|_G=|a'|_G$. So, we have that $\varphi(a)=a'$ for some $\varphi\in\mathrm{E}(G)$, the endomorphism ring of $G$. If, for a moment, $s\neq i$, then we see that the elements $a_i$ and $a'$ are contained in $A^{(m-1)}$, whence by induction $\psi(a')=b_i$ for some $\psi$ which obviously could be consider as an endomorphism of $G$. If, however, $s=i$, then in the group $A_j$ will exist an element $a_j'$ and elements $b_j',b_i$ with the property that $|b_j'|_G=|b_i|_G$ and, in order to complete our argument, it remains to take into account that there exists an endomorphisms of the group $G$ which translates the element $b_j'$ into the element $b_i$.

Now, in order to substantiate our initial claim, assume by hypothesis that all $t(a_i)$ are comparable and that $t(a_j)$ is less than the given type. Then $|a_j|_G\leq |kb_1|_G$ surely holds for some minimal natural number $k$. However, if now $a_j=ka'_j$ for some $a'_j\in A_j$, then $$|a_1+\dots+a_{j-1}+a'_j+a_{j+1}+\dots+a_m|_G=|a|_G$$ (note that the index $j$ could coincide with $1$ or with $m$, but this definitely will not affect our argumentation by changing the basic record accordingly if necessary). Furthermore, by translating the element $a$ endomorphically into the element $a_1+\dots+a_{j-1}+a'_j+a_{j+1}+\dots+a_m$ and projectively on the element $a'_j$, we next observe that we can translate the element $a'_j$ into the element $b_1$, as needed.
\end{proof}

Before proceeding further, we would like to mention explicitly the following important result established in \cite{FG}: {\it The Abelian $p$-group $A$, where $p$ is an arbitrary but fixed prime, is fully transitive if, and only if, the square $A\oplus A$ is transitive if, and only if, the square $A\oplus A$ is fully transitive. In addition, for any ordinal $\lambda >1$, the group $A^{(\lambda)}$ is fully transitive precisely when it is transitive}. Some parallel generalizations concerning Krylov transitivity can be found in \cite[Theorem 2.13]{DG}, too.

\medskip

In view of this nice criterion, we will be now concentrated on the case of $m=2$ in Lemma~\ref{1} and Proposition~\ref{11} (2). So, by collecting some of the ideas presented above combined with some new arguments presented in the subsequent section, we will expand both the aforementioned theorem of Files-Goldsmith to its analogue in the torsion-free situation and Corollary~\ref{2} as follows:

\begin{theorem}\label{15} If $A$ is a group such that the direct sum $A\oplus A$ is Krylov transitive (in particular, is both transitive and fully transitive), then $A$ is fully transitive. Conversely, there are groups $G$ which are simultaneously transitive and fully transitive, but such that their square $G\oplus G$ is not longer a Krylov transitive group.

In addition, there is a proper Krylov transitive group $G$ (which is neither transitive nor fully transitive) such that its square $G\oplus G$ is also not longer a Krylov transitive group.
\end{theorem}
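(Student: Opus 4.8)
The plan is to split the argument into three independent pieces matching the three sentences of the statement. The first sentence is essentially already in hand: if $A \oplus A$ is Krylov transitive, then by Lemma~\ref{1} (applied with $\mathfrak{m}=2$), or equivalently by Proposition~\ref{11}(2), the group $A$ is fully transitive. Since being both transitive and fully transitive is stronger than being Krylov transitive (a transitive-and-fully-transitive group is a fortiori Krylov transitive on elements of equal height), the parenthetical ``in particular'' case is subsumed. So the forward direction costs essentially nothing beyond citing Lemma~\ref{1}.

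For the converse — producing a group $G$ that is both transitive and fully transitive but whose square is not Krylov transitive — the natural strategy is to build $G$ as a rank-$2$ (or small finite rank) torsion-free group that is homogeneous of some type $\tau$ and transitive, but engineered so that $G \oplus G$ fails Krylov transitivity. The point is that Proposition~\ref{7} and Lemma~\ref{8} say homogeneity forces nice behavior, so $G$ itself must \emph{not} be homogeneous in a way that matters: I would instead take $G$ with two incomparable types $t_1, t_2$ occurring among its elements, but with $\pi(t_1) \cap \pi(t_2) \neq \emptyset$, so that Proposition~\ref{14} does \emph{not} apply and there is room for $G\oplus G$ to fail. Concretely, one picks two height sequences realized in $G$ on a basis, arranges that $G$ is transitive (e.g.\ because its automorphism group acts transitively on elements of each height class — this can be checked directly for an explicit rank-$2$ construction, or one can invoke an already-known transitive example from \cite{KMT}), and then exhibits two elements $x = a_1 + a_2$ and $y = b_1 + b_2$ in $G \oplus G$ with $|x|_{G\oplus G} = |y|_{G\oplus G}$ for which no endomorphism of $G \oplus G$ carries $x$ to $y$; the obstruction comes from the fact that an endomorphism of $G \oplus G$ is a $2\times 2$ matrix over $\mathrm{End}(G)$, and the incomparable-types-with-overlapping-$\pi$ phenomenon blocks the needed coordinate maps. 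I expect the bulk of the work — and the main obstacle — to be the explicit verification that the chosen rank-$2$ group $G$ is genuinely transitive (full transitivity of such small-rank homogeneous-ish groups is comparatively easy, but transitivity requires controlling $\mathrm{Aut}(G)$), and this is presumably why the authors defer it to the examples section.

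The third sentence — a proper Krylov transitive $G$, neither transitive nor fully transitive, with $G \oplus G$ not Krylov transitive — I would obtain directly from the construction underlying Example~\ref{(4)} announced for the next section, where a Krylov transitive torsion-free group that is neither transitive nor fully transitive is built (this is the affirmative answer to Problem~44 of \cite{KMT} mentioned in the abstract). For such a $G$, the failure of $G \oplus G$ to be Krylov transitive is in fact automatic from the first sentence of this very theorem read contrapositively: if $G \oplus G$ \emph{were} Krylov transitive, then $G$ would be fully transitive, contradicting the choice of $G$ as not fully transitive. So the only real content here is to point at the Example and invoke the implication already proved.

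Thus the proof assembles as: (i) cite Lemma~\ref{1} for the implication; (ii) construct or cite an explicit transitive, fully transitive $G$ (with incomparable types whose prime-supports overlap) and exhibit elements witnessing that $G\oplus G$ is not Krylov transitive, deferring the transitivity verification to the examples section; (iii) invoke Example~\ref{(4)} together with the contrapositive of (i) to handle the final sentence. The one genuinely delicate step is (ii), specifically pinning down $\mathrm{Aut}(G)$ well enough to certify transitivity of $G$ while simultaneously certifying that $\mathrm{End}(G\oplus G)$ is too small to move one well-chosen element to another of the same height.
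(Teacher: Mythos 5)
Parts (i) and (iii) of your plan are correct and coincide with the paper's. The paper itself remarks that the first sentence follows from Lemma~\ref{1} (or Proposition~\ref{11}(2)) with $\mathfrak{m}=2$, even though it then gives a short direct projection argument; and the final sentence is obtained exactly as you propose, by feeding the group of Example~\ref{(4)} into the contrapositive of the first sentence. The gap is in step (ii), which is the real content of the theorem: you describe the desired properties of $G$ but do not construct it, and the specific shape you suggest --- a rank-$2$ group with two incomparable types --- cannot support the obstruction you have in mind. The mechanism is that $\mathrm{End}(G\oplus G)$ consists of $2\times 2$ matrices over $\mathrm{End}(G)\cong\mathbb Z$, so the image of $x=\bbvec a_1+\bbvec b_2$ (with $\bbvec b_j$ the copy of $\bbvec a_j$ in the second summand) under any endomorphism lies in $\langle \bbvec a_1,\bbvec a_2,\bbvec b_1,\bbvec b_2\rangle$; to exhibit a $y$ with $\chi(y)=\chi(x)$ outside this subgroup one needs a third element $\bbvec a_3$ that is linearly independent of $\bbvec a_1,\bbvec a_2$ and has type incomparable to both. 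In rank $2$ every further primitive vector is an integral combination of the first two, and the natural witness collapses: taking $\bbvec a_3=\bbvec a_1+\bbvec a_2$, the matrix $\left(\begin{smallmatrix}1&1\\0&1\end{smallmatrix}\right)$ already carries $\bbvec a_1+\bbvec b_2$ to $\bbvec a_1+\bbvec b_3$. This is precisely why the paper's construction is of rank $3$.

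Two further points where your sketch misplaces the difficulty. First, for $G$ to be fully transitive as well as transitive it is not enough to have two incomparable types: the paper arranges that \emph{all} pure rank-$1$ subgroups of $G$ have pairwise incomparable types and that all $p$-heights are finite; then comparable characteristics force two elements into the same pure rank-$1$ subgroup, where an integer multiplication (for full transitivity) or $\pm 1$ (for transitivity) does the job. In particular, certifying transitivity is \emph{not} the main obstacle --- the group is transitive precisely because $\mathrm{Aut}(G)=\{\pm 1\}$ and equal characteristics force $x=\pm y$. The genuine work, which your proposal leaves untouched, is choosing the disjoint prime sets $\mathcal P_k$ (with the normalization $(\dag)$) so that the characteristics come out as claimed and $\mathrm{End}(G)\cong\mathbb Z$ via Lemma~\ref{first}. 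Second, the paper also gives an independent route to step (ii) using Dobrusin's finite-rank quasi-homogeneous example with maximal types and injective endomorphisms, where the failure of Krylov transitivity of the square is detected by a height computation with $pa+b$ and $a+b$; your proposal does not anticipate that alternative, but it is not needed for correctness.
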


\begin{proof} Given $A\oplus A$ is Krylov transitive for some group $A$. We now could directly apply either Lemma~\ref{1} and Proposition~\ref{11} (2) to get the wanted claim, but we shall now use a more transparent approach in proving that $A$ is necessarily fully transitive. In fact, knowing that the square $A_1\oplus A_2$ with $A_1,A_2\cong A$ is Krylov transitive, we will show that the group $A_1$ is fully transitive. In fact, let us choose $a,b\in A _1$ such that $|a|_A=|b|_A$. Letting also $c\in A_2$ with $|c|_A=|b|_A$, we then derive elementarily that $|a+c|_A=|a|_A$, and so $f(a)=a+c$, where $f\in \mathrm{E}(A_1\oplus A_2)$. Given  $\pi:A_1\oplus A_2\to A_1$ be the corresponding projection, we then deduce that $(1-\pi)(a+c)=c$, and hence $(1-\pi)f(a)=c$. However, by condition, we have that $g(c)=b$ for some $g\in \mathrm{E}(A_1\oplus A_2)$. Therefore, $\pi(g(1-\pi)f)\pi\in \mathrm{E}(A_1)$ whence $(\pi(g(1-\pi)f)\pi)(a)=b$, as required.

\medskip

As for the invalidity of the converse implication, we shall exhibit in what follows two independent constructions of groups which are both transitive and fully transitive having a few exotic properties like these:

\medskip

{\bf First Construction}: In \cite[Example 7.1]{Du} was constructed a transitive, fully transitive quasi-homogeneous group $G$ of finite rank, which is not homogeneous and in which every type of $\tau(G)$ is maximal (see also \cite[\S~41]{KMT}). Moreover, all endomorphisms of $G$ are just monomorphisms. The structure of $G$ entirely relies on a certain commutative extension of $\mathbb{Z}$ whose ideals are generated by integers (e.g., the most simple examples are the subrings of the ring of integers $\mathbb{Q}$).  

Now, suppose that $H=A_1\oplus A_2$, where $A_1\cong A_2\cong G$. Let us choose $a\in A_1\setminus pA_1$ and $b\in A_2\setminus pA_2$ with nonequal types. Therefore, one has that $|pa+b|_G=|a+b|_G$. Assuming then in a way of contradiction that $f(pa+b)=a+b$ for some $f\in\mathrm{E}(H)$, we will obtain that $pf(a)-a=b-f(b)\neq 0$ since $a\notin pA_1$. Furthermore, in view of maximality of the types, one may calculate that $t(pf(a)-a)=t(a)$ and $t(b-f(b))=t(b)$. So, the equality $pf(a)-a=b-f(b)\neq 0$ is absolutely impossible, that is the wanted contradiction thus substantiating our claim.

\medskip

{\bf Second Construction}: We claim here that {\it there exists a group $G$ of rank $3$ that is both transitive and fully transitive such that $G\oplus G$ need not be Krylov transitive. Even more, such a group can be constructed whose endomorphism ring is isomorphic to $\mathbb Z$}.

\medskip

In fact, the following construction is clearly closely related to the construction in Example~\ref{(3)} quoted below in the subsequent section. We will construct the desired group $G$ so that

$$F:=\langle\bbvec a_1 \rangle\oplus\langle\bbvec a_2 \rangle\oplus\langle\bbvec a_3\rangle\subseteq G\subseteq {\mathbb Q} F:= V.$$

\medskip

Extend $\{\bbvec a_1, \bbvec a_2, \bbvec a_3\}$ to a list $\{\bbvec a_k: k=1, 2, \cdots\}$ so that $\{A_k:= \langle\bbvec a_k\rangle: k=1,2,\cdots\}$ is a complete list of all rank one summands of $F$.

Next, for each $k=1,2,\dots$, let $F=A_k\oplus B_k$ be a decomposition.

We let ${\mathcal P}_k$ for $k=1,2,\dots$ be a collection of infinite disjoint sets of primes (in particular, ${\mathcal P}_1$ is not empty as it is demonstrated in the cited Example~\ref{(3)}). For each $k$, after possibly throwing out a finite number of primes in ${\mathcal P}_k$, we may assume that these sets satisfy the following pivotal condition:

\medskip

$(\dag)$ {\it If $j=1,2,3$, $k\in \{1,2,\dots\}$, $j\ne k$ and $\bbvec a_j=a_{j,k}+b_{j,k}\in A_k\oplus B_k$, then $\val{b_{j,k}}_{F,p}=0$ for all $p\in {\mathcal P}_k$.}

\medskip

Again as in Example~\ref{(3)}, we define
$$
              G:= F+\langle (1/p)\bbvec a_k: k=1,2,\dots, {\rm and}\ p\in {\mathcal P}_k\rangle.
$$

\medskip

The following facts are established using totally analogous arguments to those found in Example~\ref{(3)}:

\medskip

{\bf (A)} The types of $\bbvec a_k$ in $G$ for distinct $k=1,2,\dots$ are not comparable. (Note that in Example~\ref{(3)} itself, $\chi(\bbvec a_1)={\bf 0}$ was actually strictly smaller than the types of any other $\bbvec a_k$, for $k>1$.)

\medskip

{\bf (B)} The endomorphism ring of $G$ is indeed isomorphic to $\mathbb Z$ (this uses an even simpler version of Lemma~\ref{first}).

\medskip

{\bf (C)} If $j=1,2,3$, then condition $(\dag)$ implies that
$$
 \val {\bbvec a_j}_{G,p}=\begin{cases}
                          1, & {\rm if}\ p\in {\mathcal P}_j;\cr
                          0, & {\rm otherwise.}\cr
\end{cases}
$$

\medskip

Clearly, all $p$-heights of non-zero elements of $G$ will be finite. This observation, together with points (A) and (B), is completely enough to show that $G$ is both transitive and fully transitive.

It now follows from (C) that
$$
          \chi({\bbvec a}_1)\wedge \chi({\bbvec a}_{2})=\chi({\bbvec a}_1)\wedge \chi({\bbvec a}_{3})=(0,0,\dots)={\bf 0}.
$$

\medskip

We need to verify that $G\oplus G$ fails to be Krylov transitive. To that end, we use the above letters $\bbvec a_j$ to represent the above elements of $G$ thought of as in the first summand of $G\oplus G$, and we use $\bbvec b_j$ to represent the corresponding elements of $G$ thought of as in the second summand of $G\oplus G$.

In $G\oplus G$, consider $x:= \bbvec a_1+ \bbvec b_2$ and $y:=\bbvec a_1+\bbvec b_3$. It easily follows that $\chi(x)=\chi(y)={\bf 0}$. However, if $\phi$ is an endomorphism of $G\oplus G$, it follows easily from (B) that
$$
              \phi(x) \in \langle \bbvec a_1,\bbvec a_2, \bbvec b_1,\bbvec b_2\rangle.
$$

\medskip

\noindent This immediately implies that $\phi(x)\ne y$, so that $G\oplus G$ is not Krylov transitive, as promised.

\medskip

Next, to show the truthfulness of the additional part, with Example~\ref{(4)} from the next section at hand, the existence of a Krylov transitive group which is neither transitive nor fully transitive helps us to employ the first part of the statement to conclude the desired claim.

Nevertheless, as a totally different and more direct argument in showing that, we may propose the following one:

\medskip

Let $B$ be a group which is fully transitive, but not transitive (see, e.g., Example~\ref{(2)} below). Also, let $A$ be a group which is transitive, but not fully transitive (see, e.g., Example~\ref{(3)} below), making sure that, for every prime $p$, one of the groups $B$ and $A$ is $p$-divisible. We then let $G=A\oplus B$ and it can be shown that it possesses the requisite properties. So, assume that is the case and, moreover, that $\bbvec a_1$, $\bbvec a_2$ and $\bbvec a_3$ are linearly independent elements of $F$ that each generate summands of $F$ that operate as in the proof of Example~\ref{(3)}. Furthermore, if the squares $G\oplus G$ were Krylov transitive groups, it would immediately imply that its summand $A\oplus A$ would also be Krylov transitive. With some very minor changes, the above argumentation again leads to a contradiction:

\medskip

Indeed, as showed above, think of $\bbvec b_1$, $\bbvec b_2$ and $\bbvec b_3$ being the corresponding elements of $A\oplus A$ in the second summand. And again, let we put $x:= \bbvec a_1+ \bbvec b_2$ and $y:=\bbvec a_1+\bbvec b_3$. It easily follows now that $\chi(x)=\chi(y)=\chi({\bbvec a_1})$. However, if $\phi$ is any endomorphism of $A\oplus A$, it follows at once that
$$
              \phi(x) \in \langle \bbvec a_1,\bbvec a_2, \bbvec b_1,\bbvec b_2\rangle.
$$
This, in turn, immediately implies that $\phi(x)\ne y$, so that $A\oplus A$ is not Krylov transitive which is the wanted contrary to our assumption.
\end{proof}

We finish off this section with the following commentaries:

\begin{remark} It is worthwhile noticing that the last assertion unambiguously shows that there is no an absolute analog with the cited above theorem of Files-Goldsmith from \cite{FG}, because the torsion-free case is rather more complicated than the primary case.
\end{remark}

\section{Constructions and Examples}

The relationships between our four types of ``transitivity" are illustrated in the following {\bf Venn diagram}:

\setlength{\unitlength}{1cm}

\begin{picture}(14,7)
\linethickness{.7mm}
\put(0,0,0){\line(1,0){12}}
\put(0,3,0){\line(1,0){6}}
\put(0,6,0){\line(1,0){12}}
\put(0,0,0){\line(0,01){6}}
\put(3,0,0){\line(0,01){6}}
\put(6,0,0){\line(0,01){6}}
\put(9,0,0){\line(0,01){6}}
\put(12,0,0){\line(0,01){6}}
\linethickness{.05mm}
\put(0,3.75){\line(1,0){6}}
\put(0,4.5){\line(1,0){6}}
\put(0,5.25){\line(1,0){6}}
\put(3.75,0){\line(0,1){6}}
\put(4.5,0){\line(0,1){6}}
\put(5.25,0){\line(0,1){6}}
\put(1.5,0){\line(-1,1){1.5}}
\put(3,0){\line(-1,1){3}}
\put(4.5,0){\line(-1,1){4.5}}
\put(6,0){\line(-1,1){6}}
\put(6,1.5){\line(-1,1){4.5}}
\put(6,3){\line(-1,1){3}}
\put(6,4.5){\line(-1,1){1.5}}
\put(3,0){\line(1,1){6}}
\put(3,1.5){\line(1,1){4.5}}
\put(3,3){\line(1,1){3}}
\put(3,4.5){\line(1,1){1.5}}
\put(4.5,0){\line(1,1){4.5}}
\put(6,0){\line(1,1){3}}
\put(7.5,0){\line(1,1){1.5}}
\put(1,5.4){(2)}
\put(4.1,5.4){(1)}
\put(4.0,.7){(3)}
\put(1.0,.7){(4)}
\put(7,3.3){(5)}
\put(10.5,3.3){(6)}
\end{picture}

\vskip .2in

\setlength{\unitlength}{.5cm}
\begin{picture}(14,3)
\linethickness{.6mm}
\put(0,0,0){\line(1,0){2}}
\put(0,0,0){\line(0,1){2}}
\put(0,2,0){\line(1,0){2}}
\put(2,0,0){\line(0,1){2}}
\put(3,1,0){= Fully Transitive $=(1)\cup (2)$}
\linethickness{.05mm}
\put(0,.5){\line(1,0){2}}
\put(0,1.0){\line(1,0){2}}
\put(0,1.5){\line(1,0){2}}
\end{picture}

\begin{picture}(14,3)
\linethickness{.6mm}
\put(0,0,0){\line(1,0){2}}
\put(0,0,0){\line(0,1){2}}
\put(0,2,0){\line(1,0){2}}
\put(2,0,0){\line(0,1){2}}
\put(3,1,0){= Transitive $=(1)\cup (3)$}
\linethickness{.05mm}
\put(.5,0){\line(0,1){2}}
\put(1.0,0){\line(0,1){2}}
\put(1.5,0){\line(0,1){2}}
\end{picture}

\begin{picture}(14,3)
\linethickness{.6mm}
\put(0,0,0){\line(1,0){2}}
\put(0,0,0){\line(0,1){2}}
\put(0,2,0){\line(1,0){2}}
\put(2,0,0){\line(0,1){2}}
\put(3,1,0){= Krylov Transitive $=(1)\cup (2)\cup (3)\cup (4)$}
\linethickness{.001mm}
\put(1,0){\line(-1,1){1}}
\put(2.0,0){\line(-1,1){2}}
\put(2,1){\line(-1,1){1}}
\end{picture}

\begin{picture}(14,3)
\linethickness{.6mm}
\put(0,0,0){\line(1,0){2}}
\put(0,0,0){\line(0,1){2}}
\put(0,2,0){\line(1,0){2}}
\put(2,0,0){\line(0,1){2}}
\put(3,1,0){= Weakly Transitive $= (1)\cup (3)\cup (5)$}
\linethickness{.001mm}
\put(1,0){\line(1,1){1}}
\put(0,0){\line(1,1){2}}
\put(0,1){\line(1,1){1}}
\end{picture}

\vskip3.0pc

This diagram unambiguously illustrates, for instance, that the class of transitive groups is the intersection of the class of weakly transitive groups with the class of Krylov transitive group. We claim that the indicated disjoint regions (1)-(6) are non-empty; that is, each of which contains some group.

\medskip

We continue with a series of examples by constructing groups lying in all of the indicated above regions.

\begin{example}\label{(1)} There is a group in region (1), i.e., it is both transitive and fully transitive (and hence also weakly and Krylov transitive).
\end{example}

\begin{proof} Let $G$ be any rank-$1$ group.
\end{proof}

\begin{example}\label{(2)} There is a group in region (2), i.e., it is fully transitive (and hence Krylov transitive), but not transitive (and hence not weakly transitive).
\end{example}

\begin{proof} This was referred to on page 385 of \cite{KMT}. In particular, this group $G$ has the property that $G/pG \cong {\mathbb Z}_p$ for each prime $p$.
\end{proof}

\begin{example}\label{(3)} There is a group in region (3), i.e., it is transitive (and hence both Krylov and weakly transitive), but not fully transitive.
\end{example}

\begin{proof} The existence of such a group is also referred to on page 385 of \cite{KMT}, but however we found another way to construct an example which has numerous exotic properties. Firstly, we use the following elementary but very useful observation:

\begin{lemma}\label{first} Suppose $V$ is a ${\mathbb Q}$-vector space, $\phi:V\to V$ is a linear transformation and $v\in V$. If for every $w\not\in {\mathbb Q}v$ we have $\phi(w)\in {\mathbb Q}w$, then there is a single $\alpha\in {\mathbb Q}$ such that $\phi(w)=\alpha w$ for all $w\in V$.
\end{lemma}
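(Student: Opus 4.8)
The plan is to prove Lemma~\ref{first} by a clean two‑case linear‑algebra argument. First I would dispose of the trivial possibility $\dim_{\mathbb Q} V\leqslant 1$, where there is nothing to prove. So assume $\dim_{\mathbb Q}V\geqslant 2$; fix $v\in V$ and the linear transformation $\phi$ with the stated property, namely $\phi(w)\in{\mathbb Q}w$ whenever $w\notin{\mathbb Q}v$.

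The first case is $v=0$ (or, more generally, the case where the hypothesis says $\phi(w)\in{\mathbb Q}w$ for \emph{all} nonzero $w$). Here every nonzero vector is an eigenvector of $\phi$, say $\phi(w)=\alpha_w w$. I would then show all the scalars $\alpha_w$ coincide: given linearly independent $w_1,w_2$, apply $\phi$ to $w_1+w_2$ to get $\alpha_{w_1+w_2}(w_1+w_2)=\alpha_{w_1}w_1+\alpha_{w_2}w_2$, and compare coefficients in the basis $\{w_1,w_2\}$ to conclude $\alpha_{w_1}=\alpha_{w_2}$; for $w_2\in{\mathbb Q}w_1$ the equality $\alpha_{w_1}=\alpha_{w_2}$ is immediate from linearity. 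Since $\dim V\geqslant 2$, any two nonzero vectors can be linked through a vector independent from each, so a single $\alpha$ works for all $w$, and then $\phi=\alpha\,\mathrm{id}$ on all of $V$ by linearity.

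The second, main case is $v\neq 0$. Pick any $u\notin{\mathbb Q}v$; then $\phi(u)=\alpha u$ for some $\alpha\in{\mathbb Q}$, and I claim this $\alpha$ is the desired global scalar. For any $w\notin{\mathbb Q}v$ with $w\notin{\mathbb Q}u$ as well, both $u$ and $w$ and $u+w$ lie outside ${\mathbb Q}v$ (the last because $u+w\in{\mathbb Q}v$ together with $w\notin{\mathbb Q}v$ would force $u$ and $v$, $w$ to be dependent in a way one checks is impossible, or more simply: if $u+w=qv$ then $w=qv-u$, and one sees $w\in{\mathbb Q}v$ iff $u\in{\mathbb Q}v$, contradiction); hence $\phi$ scales each of $u,w,u+w$, and the same coefficient‑comparison trick as above forces $\alpha_w=\alpha_u=\alpha$. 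This settles all $w$ outside ${\mathbb Q}u\cup{\mathbb Q}v$. It remains to handle $w\in{\mathbb Q}u$ (immediate by linearity, $\phi(w)=\alpha w$) and, crucially, $w\in{\mathbb Q}v$ itself, i.e.\ to show $\phi(v)=\alpha v$. For this, since $\dim V\geqslant 2$ choose $u'\notin{\mathbb Q}v\cup{\mathbb Q}u$ (possible because two one‑dimensional subspaces cannot cover a space of dimension $\geqslant 2$ over the infinite field ${\mathbb Q}$); then $v+u'\notin{\mathbb Q}v$, so $\phi(v+u')=\beta(v+u')$ for some $\beta$, while $\phi(u')=\alpha u'$ by the previous step; subtracting gives $\phi(v)=\beta v+(\beta-\alpha)u'$, and since $\phi(v)$ must be expressible without a $u'$‑component unless... here I would instead argue directly: also $\phi(v-u')=\gamma(v-u')$, so adding the two relations $\phi(2v)=\beta(v+u')+\gamma(v-u')=(\beta+\gamma)v+(\beta-\gamma)u'$, whence $\beta=\gamma$ and $\phi(v)=\beta v$; finally applying $\phi$ to $u'+$ (a suitable combination) pins $\beta=\alpha$, e.g.\ from $\phi(v+u')=\beta v+\beta u'$ and independence one reads off that the $u'$‑eigenvalue is $\beta$, but it is also $\alpha$, so $\beta=\alpha$. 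Thus $\phi(w)=\alpha w$ for every $w\in V$.

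The step I expect to be the only mildly delicate one is the very last: extracting $\phi(v)=\alpha v$, since $v$ is precisely the vector the hypothesis says nothing about directly. The resolution is exactly the ``sandwich'' argument above — use two vectors $v\pm u'$ (or $v, v+u', v+2u'$, etc.), all of which \emph{do} lie outside ${\mathbb Q}v$, apply the already‑established scaling to them, and solve the resulting linear system for the action on $v$. Everything else is routine coefficient comparison in a two‑dimensional subspace, repeatedly using that ${\mathbb Q}$ is an infinite field so that finitely many proper subspaces never cover $V$ when $\dim V\geqslant 2$.
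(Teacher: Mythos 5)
Your overall strategy is the same as the paper's (establish a common eigenvalue for all vectors outside ${\mathbb Q}v$ by comparing coefficients on sums, then recover $\phi(v)$ by expressing a multiple of $v$ as a combination of vectors outside ${\mathbb Q}v$), but one step in your main case is false as stated. You claim that for $u,w\not\in{\mathbb Q}v$ with $w\not\in{\mathbb Q}u$, the sum $u+w$ also lies outside ${\mathbb Q}v$. Take $w=v-u$: then $w\not\in{\mathbb Q}v$ (else $u\in{\mathbb Q}v$) and $w\not\in{\mathbb Q}u$ (else $v\in{\mathbb Q}u$, forcing $u\in{\mathbb Q}v$), yet $u+w=v\in{\mathbb Q}v$. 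Your parenthetical justification is where the slip occurs: from $u+w=qv$ you correctly deduce that $w\in{\mathbb Q}v$ iff $u\in{\mathbb Q}v$, but both sides of that equivalence are \emph{false}, so there is no contradiction. The repair is exactly the device the paper uses (and which you mention only in your closing remarks, in the wrong place): at most one of $u+w$ and $u+2w$ can lie in ${\mathbb Q}v$, since their difference is $w\not\in{\mathbb Q}v$; so after replacing $w$ by $2w$ — which does not change the scalar $\alpha_w$ — the coefficient comparison goes through. With that patch, your argument that all $\alpha_w$ for $w\not\in{\mathbb Q}v$ coincide is correct.

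Your final step recovering $\phi(v)$ is sound in substance but garbled in presentation: from $\phi(2v)=(\beta+\gamma)v+(\beta-\gamma)u'$ alone you cannot conclude $\beta=\gamma$, since you do not yet know $\phi(v)\in{\mathbb Q}v$; you must also feed in $\phi(u')=\alpha u'$, e.g.\ by writing $\phi(v)=\phi(v+u')-\phi(u')=\beta v+(\beta-\alpha)u'$ and $\phi(v)=\phi(v-u')+\phi(u')=\gamma v+(\alpha-\gamma)u'$ and comparing coefficients to get $\beta=\gamma=\alpha$. (The paper does this more economically in one line: $\phi(v)=\phi(v-w)+\phi(w)=\alpha(v-w)+\alpha w=\alpha v$, since both $w$ and $v-w$ avoid ${\mathbb Q}v$.) Your separate treatment of the case $v=0$ is unobjectionable but unnecessary once the main argument is fixed, since when $v=0$ the hypothesis applies to every nonzero vector and the same computation works.
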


\begin{proof} First and foremost, we shall explain why the defined linear transformations are monomorphisms: In fact, we may clearly assume $V\ne \mathbb{Q} v$. Now, for every $w\not\in \mathbb{Q} v$, there is a scalar $\alpha_w$ such that $\phi(w)=\alpha_w w$; so if $z\in \mathbb{Q} w$ we also have $\phi(z)=\alpha_w(z)$.

Next, if $z\not\in \mathbb{Q} v \cup  \mathbb{Q} w$, then either $w+z$ or $w+2z$ is not in $\mathbb{Q} v$. Replacing $z$ by $2z$ if necessary, we may assume $y:=w+z$ is not in $\mathbb{Q} v$. So, we must have

$$\alpha_y  y = \phi(y)=\phi(w)+\phi(z)=\alpha_w w + \alpha_z z.$$

\medskip

But since $w$ and $z$ are linearly independent, this implies that $\alpha_y=\alpha_w=\alpha_z$.  In other words, for all $z\not\in \mathbb{Q} v$, we must have $\alpha_z=\alpha_w$.

Denote this constant value by $\alpha$. Since $w$, $v-w$ are not in $\mathbb{Q} v$, we can conclude that

$$\phi(v)=\phi(v-w+w)=\alpha(v-w)+\alpha(w)= \alpha v.$$

\medskip

This gives the desired conclusion.

We next may clearly assume that $V$ does not have dimension $1$. Our hypotheses imply that every vector $w\not\in {\mathbb Q}v$ must be in some eigenspace for $\phi$. This apparently implies that there can be only one non-zero eigenspace for $\phi$ and that that eigenspace contains all of $V$, giving the result.
\end{proof}

Furthermore, returning to the former example, the wanted group $G$ we going to construct is transitive (and so both Krylov and weakly transitive) and not fully transitive, but it has some interesting additional properties like these:

\medskip

(1) If $F$ is any non-cyclic free group of countable rank, such a group $G$ can be found with $F\subset G\subset {\mathbb Q}F$;

(2) The endomorphism ring of $G$ is $\mathbb Z$, i.e., any endomorphism is simply multiplication by an integer;

(3) Any two pure rank-$1$ subgroups of $G$ have different types.

\medskip

The condition that a group is transitive means that, in some sense, it has ``enough" automorphisms for its size. The transitive group in this example, however, can be fairly ``large" (i.e., countably infinite rank), and still have an automorphism group that is as small as possible (namely $\pm 1$).

\medskip Turning to our construction, let $A_k$ for $k\in {\mathbb N}=\{1,2,3,\dots\}$ be the collection of all rank-1 pure subgroups of $F$ (i.e., rank-$1$ direct summands of $F$); and let ${\bf a}_k\in A_k$ be some generator of $A_k$.

For each $k\in \mathbb N$ fix a decomposition $F=A_k\oplus B_k$.  If $j\ne k$, then there are unique $a_{j,k}\in A_k$ and $b_{j,k}\in B_k$ such that ${\bf a}_j = a_{j,k}+b_{j,k}$. Since ${\bf a}_j$ and ${\bf a}_k$ determine different summands of $F$, we must have $b_{j,k}\ne 0$. In particular, $\val {b_{j,k}}_{F,p}=0$ for all but finitely many primes.

Let ${\mathcal P}_k$ for $k\in {\mathbb N}$ be a collection of disjoint sets of primes with the following properties:

(a) ${\mathcal P}_1=\emptyset$;

(b) If $k>1$, then ${\mathcal P}_k$ is infinite;

(c) If $k>1$ and $p\in {\mathcal P}_k$, then $\val {b_{1,k}}_{F,p}=0$.

\noindent  Regarding point (c), we just choose each of our infinite disjoint sets ${\mathcal P}_k$ for $k>1$ so as to avoid the finite number of primes where $\val {b_{1,k}}_{F,p}$ is non-zero.

Let
$$
          G=\langle  {\bf a}_k/p: k\in {\mathbb N}, p\in {\mathcal P}_k\rangle\subset {\mathbb Q}F.
$$
Further, for a prime $p$, let
$$
         F^p =
               \begin{cases}

                  F, &  {\rm if\ }p\not\in \cup_{k\in \mathbb N} {\mathcal P}_k.\cr
                  {\mathbb Z} ({\bf a}_k/p)\oplus B_k=(1/p)A_k\oplus B_k,   &{\rm if}\ p\in {\mathcal P}_k,\cr
               \end{cases}
$$
In other words,  for all primes $p$, $F^p/F$ will be the $p$-torsion subgroup of $G/F$.

Note that $G/F^p$ will have no $p$-torsion, so that if $x\in F$ then $\val x_{G,p}=\val x_{F^p,p}<\infty$. In particular, all $p$-heights of non-zero elements of $G$ will be finite.

\medskip

\medskip\noindent {\bf Claim~A:} $\chi({\bf a}_1)=(0,0,\dots):={\bf 0}$.

\medskip

Let $p$ be some prime. If $p\not\in \cup_{k\in \mathbb N} {\mathcal P}_k$, then
$$\val {{\bf a}_1}_{G,p}=\val {{\bf a}_1}_{F^p,p}=\val {{\bf a}_1}_{F,p}=0.
$$
And if $p\in {\mathcal P}_k$, then $k>1$ and by condition (c) in our choice of ${\mathcal P}_k$,
$$\val {{\bf a}_1}_{G,p}=\val {{\bf a}_1}_{F^p,p}= \val {a_{1,k}+b_{1,k}}_{F^p,p}=0.
$$

\medskip If $k>1$, then since $\val {{\bf a}_k}_{G,p}=1$ for all $p\in {\mathcal P}_k$, we have $\t({\bf a}_1)=\overline {\bf 0}<\t ({\bf a}_k)$.

\medskip

\medskip \noindent {\bf Claim B:} If $j,k$ are distinct elements of $\{2, 3, \dots\}$, then  $\t ({\bf a}_j)$ and $ \t ({\bf a}_k)$ are incomparable.

\medskip

For all $p\in {\mathcal P}_k$ we have $\val {{\bf a}_k}_{G,p}=1$ and except for finitely many such $p$ we have

$$\val {{\bf a}_j}_{G,p}=\val {{\bf a}_j}_{F^p,p}= \val {a_{j,k}+b_{j,k}}_{F^p,p}\leq \val {b_{j,k}}_{B_k,p}=0.
$$

\medskip

\noindent In particular, $\t ({\bf a}_k)\not \leq \t ({\bf a}_j)$; similarly, $\t ({\bf a}_j)\not \leq \t ({\bf a}_k)$.

\medskip

Let $V={\mathbb Q}F$. Any subspace of $V$ of dimension 1 is of the form ${\mathbb Q} {\bf a}_k$ for a unique $k\in \mathbb N$. Therefore, Claim~B implies that the elements of the typeset of $G$ are in one-to-one correspondence with the rank-1 subgroups of $G$.

\medskip

\medskip \noindent {\bf Claim C:} Every endomorphism of $G$ is multiplication by some integer $n$.

\medskip

Suppose $\phi:G\to G$ is an endomorphism; we can clearly extend this to an endomorphism $\phi: V\to V$. We apply Lemma~\ref{first} with $v={\bf a}_1$. If $k>1$, we need to show $\phi({\bf a}_k)\in {\mathbb Q} {\bf a}_k$.  If $\phi({\bf a}_k)=0$ it is trivially so, and if $\phi({\bf a}_k)\ne 0$, then $\phi({\bf a}_k)\in {\mathbb Q} {\bf a}_j$ for some unique $j\in \mathbb N$. Since
$$\t ({\bf a}_1)=\overline {\bf 0}< \t ({\bf a}_k)\leq \t (\phi({\bf a}_k))=\t ({\bf a}_j),
$$
we have $j>1$. And since $\t ({\bf a}_j)$ and $\t ({\bf a}_k)$ are comparable, by Claim~B we may conclude that $j=k$, so that $\phi({\bf a}_k)\in {\mathbb Q} {\bf a}_j = {\mathbb Q}{\bf a}_k$, as required.

Therefore, by Lemma~\ref{first} we can conclude that $\phi$ is multiplication by some rational number. But since all $p$-heights in $G$ are finite, this rational number must be an integer, as required.

\medskip  We can now quickly complete the proof of the example. To that goal, note that $\chi({\bf a}_1)={\bf 0}<\chi({\bf a}_2)$, but ${\bf a}_2\not\in A_1={\mathbb Z}{\bf a}_1$, so $G$ is clearly not fully transitive.

On the other hand, suppose $x,y\in G$ are non-zero with $\chi(x)= \chi(y)$. There are unique integers $j,k\in \mathbb N$ such that $x\in {\mathbb Q}{\bf a}_j$ and $y\in {\mathbb Q}{\bf a}_k$. It follows that
$$
     \t({\bf a}_j)=\t(x)=    \t (y)=\t ({\bf a}_k).
$$

\medskip

\noindent So, by Claim~B, $j$ must equal $k$. Therefore, all four of these elements come from the same pure rank-1 subgroup of $G$. But since all $p$-heights computed in $G$ are finite, this fact, together with $\chi(x)= \chi(y)$, implies that $x=\pm y$. In particular, $G$ must be transitive (and hence Krylov transitive).
\end{proof}

We now continue with the promised in the Abstract positive solution of \cite[Problem 44]{KMT}. Precisely, the following is true:

\begin{example}\label{(4)} There is a group in region (4), i.e., it is Krylov transitive, but neither fully transitive nor transitive (and hence not weakly transitive). In particular, this example answers in the affirmative the long-standing Problem 44 of \cite{KMT}.
\end{example}

\begin{proof} Partition the collection of all prime numbers into two infinite disjoint subsets, $P_1$ and $P_2$. Let $R_1$ be the integers localized at $P_1$ (so $R_1$ is $p$-divisible for all primes $p$ not in $P_1$) and $R_2$ be the integers localized at $P_2$. Since both $R_1$ and $R_2$ are countable infinite Euclidean domains with a countably infinite set of primes, it is clear that any construction that we can carry out for groups we can also carry out for modules over $R_1$ or $R_2$.

Therefore, we can construct a reduced $R_1$-module $A$ in the corresponding region (2) for $R_1$-modules, i.e., it is fully transitive but not transitive. Similarly, we can construct a reduced $R_2$-module $B$ in the corresponding region (3) for $R_2$-modules, i.e., it is transitive, but not fully transitive.

We now think of both $A$ and $B$ simply as groups and let $G=A\oplus B$.
It is also easy to see that if $x=(a,b)\in G$ and $y=(a',b')\in G$, then $\val x_G\leq \val y_G$ if and only if $\val a_A\leq \val {a'}_A$ and $\val b_B\leq \val {b'}_B$.

To show $G$ is Krylov transitive, suppose $\val x_G= \val y_G$. It follows that $\val a_A= \val {a'}_A$ and $\val b_B\leq \val {b'}_B$. Since $A$ is fully transitive, there is a endomorphism $\lambda:A\to A$ with $\lambda(a)=a'$, and since $B$ is transitive, there is an automorphism $\gamma:B\to B$ such that $\gamma(b)=b'$. Therefore, if $\phi=(\lambda, \gamma)$, then $\phi(x)=y$, as required.

Note that if $A\to B$ is a non-zero homomorphism, then the elements of its image would be divisible in $B$  by every $p\in P_1$ as well as by every prime $p\in P_2$. That would imply that $B$ is not reduced, contrary to its construction. So, there are no non-zero homomorphisms $A\to B$. Similarly, there are no non-zero homomorphism $B\to A$. So, $A$ and $B$ are individually fully invariant subgroups of $G$.

Since $A$ is not transitive there are elements $a,a'\in A$ with $\chi(a)=\chi(a')$ such that there is no automorphism of $A$ maps $a$ to $\a'$. It easily follows that no automorphism of $G$ maps $a$ to $a'$, so that $G$ also fails to be transitive.  Similarly, since $B$ is not fully transitive, it again follows that $G$ is not fully transitive, either.
\end{proof}

In regard to the last statement, which completely settles \cite[Problem 44]{KMT}, it is worth to notice that in \cite{BGGS} was constructed a Krylov transitive Abelian $2$-group which is neither fully transitive nor transitive. So, the last statement could be viewed as a torsion-free analogue of the cited construction.

\medskip

What we can now offer is the following:

\begin{example}\label{(5)} There is a group in region (5), i.e., it is weakly transitive, but not fully transitive nor  transitive (and hence not Krylov transitive).
\end{example}

\begin{proof} The following result clearly produces a large number of examples of groups that satisfy the above requirements, as promised.
\end{proof}

\begin{theorem}\label{problem}
Suppose that $A$ and $B$ are weakly transitive groups such that ${\rm Hom}(A,B)=0={\rm Hom}(B,A)$ and for which there is a prime $p$ such that neither $A$ nor $B$ is $p$-divisible. Then $G=A\oplus B$ is weakly transitive, but not  transitive nor fully transitive.
\end{theorem}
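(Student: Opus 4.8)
The plan is to push the whole statement down to the two summands, using the hypothesis $\mathrm{Hom}(A,B)=0=\mathrm{Hom}(B,A)$. First I would record the two structural facts that drive everything. Since the composite of the inclusion $A\hookrightarrow G$ with any $\phi\in\mathrm{E}(G)$ and the projection $G\to B$ is a homomorphism $A\to B$, hence zero, one gets $\phi(A)\subseteq A$, and symmetrically $\phi(B)\subseteq B$; thus every endomorphism of $G=A\oplus B$ splits as $\phi=\phi_A\oplus\phi_B$ with $\phi_A\in\mathrm{E}(A)$, $\phi_B\in\mathrm{E}(B)$, and consequently every automorphism of $G$ splits as $\eta_A\oplus\eta_B$ with $\eta_A\in\mathrm{Aut}(A)$, $\eta_B\in\mathrm{Aut}(B)$. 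Secondly, for any prime $q$ and any $x=(a,b)\in G$ one has $|x|_{G,q}=\min\{|a|_{A,q},|b|_{B,q}\}$, because $q^nG=q^nA\oplus q^nB$. Note also that the hypothesis forces $A\ne 0\ne B$, so $A\setminus pA$ and $B\setminus pB$ are nonempty.

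For the positive claim, that $G$ is weakly transitive, I would take $x=(a,b)$ and $y=(a',b')$ in $G$ together with $\phi,\psi\in\mathrm{E}(G)$ such that $\phi(x)=y$ and $\psi(y)=x$. Splitting both maps yields $\phi_A(a)=a'$, $\psi_A(a')=a$ and $\phi_B(b)=b'$, $\psi_B(b')=b$; weak transitivity of $A$ then supplies $\eta_A\in\mathrm{Aut}(A)$ with $\eta_A(a)=a'$, and weak transitivity of $B$ supplies $\eta_B\in\mathrm{Aut}(B)$ with $\eta_B(b)=b'$. Hence $\eta:=\eta_A\oplus\eta_B\in\mathrm{Aut}(G)$ satisfies $\eta(x)=y$, which is exactly what weak transitivity of $G$ demands.

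For the negative claims I would use the prime $p$: since neither $A$ nor $B$ is $p$-divisible, choose $a\in A\setminus pA$ and $b\in B\setminus pB$, so $|a|_{A,p}=|b|_{B,p}=0$, and set $x:=(a,pb)$ and $y:=(pa,b)$. From the coordinatewise height formula and the fact that multiplication by $p$ does not alter $q$-heights for $q\ne p$, the $q$-coordinates of $\chi_G(x)$ and $\chi_G(y)$ agree for all $q\ne p$; and $|x|_{G,p}=0=|y|_{G,p}$, since in each of the two minima one coordinate has $p$-height $0$. Hence $\chi_G(x)=\chi_G(y)$. However, any $\phi\in\mathrm{E}(G)$ with $\phi(x)=y$ would split and force $\phi_B(pb)=b$, i.e.\ $b=p\,\phi_B(b)\in pB$, contradicting $b\notin pB$ (equivalently, $|\phi_B(pb)|_{B,p}\ge|pb|_{B,p}\ge 1>0=|b|_{B,p}$). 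Since transitivity of $G$ would produce such an $\eta\in\mathrm{Aut}(G)$ and full transitivity of $G$ would produce such a $\phi\in\mathrm{E}(G)$ (because $\chi_G(x)\le\chi_G(y)$), $G$ is neither transitive nor fully transitive.

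The one step that needs a genuine idea rather than bookkeeping is the choice of the witnessing pair $x,y$. The naive candidate $x=(a,b)$, $y=(pa,pb)$ shifts every $p$-height up by one, so the characteristics are not equal, and even the surviving inequality is realized by the honest endomorphism $p\cdot\mathrm{id}_G$; placing the extra factor $p$ on opposite coordinates of $x$ and $y$ makes the $p$-heights cancel inside the minimum, so the full characteristic is preserved, while the mismatch still lives in the $B$-coordinate and blocks every endomorphism of $G$. I expect this to be the only delicate point; everything else follows from the splitting $\mathrm{E}(G)\cong\mathrm{E}(A)\times\mathrm{E}(B)$ and the coordinatewise behaviour of $p$-heights.
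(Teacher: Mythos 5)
Your proposal is correct and follows essentially the same route as the paper: the splitting $\mathrm{E}(G)\cong\mathrm{E}(A)\times\mathrm{E}(B)$ from $\mathrm{Hom}(A,B)=0=\mathrm{Hom}(B,A)$, the coordinatewise computation of heights, and a witness pair whose characteristics agree because the extra factor $p$ is absorbed by the minimum. The only (harmless) difference is cosmetic: the paper uses $x=(pa,b)$, $y=(a,b)$ and derives non-full-transitivity indirectly (fully transitive would give Krylov transitive, which together with weak transitivity forces transitivity), whereas your symmetric pair $x=(a,pb)$, $y=(pa,b)$ kills transitivity and full transitivity in one stroke.
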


\begin{proof} The condition ${\rm Hom}(A,B)=0={\rm Hom}(B,A)$ implies that for any endomorphism $\xi$ of $G$, there is an endomorphism $\lambda$ of $A$ and an endomorphism $\gamma$ of $B$ such that for all $(a,b)\in G$ we have $\xi((a,b))=(\lambda (a), \gamma(b))$. We write $\xi=(\lambda, \gamma)$ in this case.

To show $G$ is weakly transitive, suppose $x=(a,b)$ and $y=(a',b')\in G$, and that there are endomorphism $\xi$ and $\xi'$ of $G$ such that $\xi(x)=y$ and $\xi'(y)=x$. If $\xi=(\lambda, \gamma)$ and $\xi'=(\lambda', \gamma')$, it follows that $\lambda(a)=a'$, $\lambda'(a')=a$, $\gamma(b)=b'$ and $\gamma'(b')=b$. Since $A$ and $B$ are weakly transitive, there are automorphisms $\lambda$ of $A$ and $\mu$ of $B$ such that $\lambda(a)=a'$ and $\mu (b)=b'$. If we let $\phi=(\lambda,\mu)$, then $\phi$ will be an automorphism of $G$ with $\phi(x)=y$. Therefore, $G$ is weakly transitive.

To show $G$ is not transitive, we use our condition on the prime $p$. Let $a\in A$ and $b\in B$ satisfy $\val a_A=\val b_B=0$. Consider the elements $x:=(pa,b)$ and $y:= (a,b)$ in $G$.  It is easy to see that $\chi_G(x)=\chi_G(y)$. Now, if $\phi=(\lambda, \mu)$ was an automorphism of $G$ with $\phi(x)=y$, then we could conclude that $\lambda$ is an automorphism of $A$ with $\lambda(pa)=a$. Since this is clearly impossible, $G$ fails to be transitive.

Note that if $G$ were to be fully transitive, then it is also Krylov transitive, and since $G$ is weakly transitive, it would have to be transitive. Since that is not the case, $G$ also fails to be fully transitive.
\end{proof}

For example, to be more concrete, if $A$ and $B$ are rank-$1$ groups of incomparable type and $p$ is a prime such that neither $A$ nor $B$ is $p$-divisible, then $G=A\oplus B$ satisfies Theorem~\ref{problem}; so there are rather small and simple groups that satisfy Problem~44. More generally, if $\alpha$ and $\beta$ are incomparable types, $A$ is  $\alpha$-homogeneous and transitive and $B$ is $\beta$-homogeneous and transitive, and neither $A$ nor $B$ is $p$-divisible, then again, $G=A\oplus B$ will satisfy the requirements. In other words, the group $G$ can also be quite large and complicated. The following assertion shows that the condition on the prime number $p$ is absolutely necessary and cannot be ignored.

\begin{example} Suppose $A$ and $B$ are rank $1$ groups and, for every prime $p$, exactly one of $A$ or $B$ is $p$-divisible. Then $G=A\oplus B$ is simultaneously transitive and fully transitive.
\end{example}

\begin{proof} Suppose $x=(a,b)$ and $y=(a',b')\in G$. Our condition on divisibility implies that $\chi_G(x)\leq \chi_G(y)$ if and only if $\chi_A(a)\leq \chi_A(a')$ and $\chi_B(b)\leq \chi_B(b')$. Since $A$ and $B$ are transitive and fully transitive, this readily implies that $G$ is so, as well.
\end{proof}

We now manage to prove our final example. Concretely, the following is valid:

\begin{example}\label{(6)} There is a group in region (6), i.e., it is neither Krylov transitive (and so not transitive or fully transitive), nor weakly transitive.
\end{example}

\begin{proof} Let $H\subseteq \mathbb Q$ be a rank-$1$ group with the following properties:

(1) $1\in H$ and $\val 1_{H,5}=0$;

(2) The endomorphism ring of $H$ equals $\mathbb Z$;

(3) ${\rm Hom}(H, {\mathbb Z})=0$, i.e., $H$ is not cyclic.

\medskip  Let $G={\mathbb Z}\oplus H$. Condition (3) above implies that $H$ is fully invariant in $G$. If we view the elements of $G$ as row vectors, then endomorphisms of $G$ can be viewed as right multiplications by matrices of the form:
$$M=
\left[\begin{matrix}
   m & h \cr
   0 & n \cr
\end{matrix}\right]
$$
where $m,n\in \mathbb Z$ and $h\in H$.

\medskip To verify that $G$ is not Krylov transitive, consider the vectors $x=(5,1)$ and $y=(1,1)$. It is fairly clear that $\val x_p=\val y_p=0$ for all primes $p$. However, if $\phi$ is any endomorphism of $G$, then we may assume $\phi$ is multiplication by the above matrix $M$. It follows that $\phi(x)=(5m, 5h+n)\ne (1,1)$. Therefore, $G$ is obviously not Krylov transitive, as pursued.

To show $G$ also fails to be weakly transitive, consider the elements $x=(5,1)$ and $y=(5,2)$. Note that
$$
    y = x \left[\begin{matrix}
   1 & 0 \cr
   0 & 2 \cr
\end{matrix}\right]\ \ \ {\rm and }\ \ \
x = y \left[\begin{matrix}
   1 & -1 \cr
   0 & 3 \cr
\end{matrix}\right].
$$

On the other hand, we claim that there is no automorphism $\phi$ of $G$ with $\phi(x)=y$. To verify this, note that if $\phi$ was such an automorphism and again, it is multiplication by the above matrix $M$, then it is easy to see that $m,n=\pm 1$. This would imply that
$$
           (5,2)=y=\phi(x)= (5,1)\left[\begin{matrix}
   \pm 1 & h \cr
   0 & \pm 1 \cr
\end{matrix}\right] =(\pm 5, 5h\pm 1).
$$
This would imply that in $H/5H\cong {\mathbb Z}_5$ we have that the element $[2]$ is equal to either $[1]$ or $[-1]=[4]$, which is evidently not the case.
\end{proof}

\section{Concluding Discussion and Open Problems}

A few additional comments concerning our results established above could be the following: First of all, we would like to point out that Lemma~\ref{1} could be extended for an arbitrary Abelian group (not necessarily torsion-free) in a rather non-trivial way as follows: {\it Given a Krylov transitive group $G=A\oplus B$ such that the complement $B$ contains a direct summand isomorphic to the group $A$, then $A$ is fully transitive}.

As a further remark, we indicate that for all our groups exhibited in the listed Example~\ref{(1)}, etc. , Example~\ref{(6)}, their square is always weakly transitive. As the work for proving this is rather technical and too hard, we voluntarily omit the details in the existing proofs, but will state below the corresponding Problem 3 to visualize the difficulty of the general situation.

\medskip

In closing, we pose three challenging questions of some interest and importance which directly arise.

\medskip

\noindent{\bf Problem 1.} Do there exist fully transitive non transitive homogeneous torsion-free groups?

\medskip

The next two queries are closely relevant to Theorem~\ref{15}.

\medskip

\noindent{\bf Problem 2.} For a group $G$ does the Krylov transitivity of the square $G\oplus G$ imply that $G$ is transitive?

\medskip

\noindent{\bf Problem 3.} If $G$ is either a Krylov transitive group or a weakly transitive group, respectively, is it true that the square $G\oplus G$ is always a weakly transitive group?

\medskip


\medskip

\noindent {\bf Funding:} The work of the second named author P.V. Danchev is partially supported by the Bulgarian National Science Fund under Grant KP-06 No. 32/1 of December 07, 2019.

\vskip3.0pc

\end{document}